\DeclareMathAlphabet{\pazocal}{OMS}{zplm}{m}{n}
\let\mathcal\pazocal
\spnewtheorem{theorem}{Theorem}{\bfseries}{\itshape}
\spnewtheorem{corollary}[theorem]{Corollary}{\bfseries}{\itshape}
\spnewtheorem{lemma}[theorem]{Lemma}{\bfseries}{\itshape}
\spnewtheorem{proposition}[theorem]{Proposition}{\bfseries}{\itshape}
\spnewtheorem{definition}[theorem]{Definition}{\bfseries}{\itshape}
\spnewtheorem{remark}[theorem]{Remark}{\bfseries}{\upshape}
\spnewtheorem{assumption}[theorem]{Assumption}{\bfseries}{\itshape}
\renewcommand{\paragraph}[1]{{\bf #1.}}
\definecolor{myred}{rgb}{0.8,0,0}  
\noindent\textbf{Proof of {#1}:}}%
\def \R{\mathbb{R}}               
\def \N{\mathbb{N}}               
\def \C{\mathbb{C}}               
\def \1{{\bf 1}}                
\def \0{{\bf 0}}
\def\qed{\hfill$\Box$}
\def \diag{\operatorname{diag}}
\definecolor{myred}{rgb}{0.9,0,0}  
\definecolor{mygreen}{rgb}{0,0.7,0}  
\definecolor{myblue}{rgb}{0.2,0,0.8}  
\definecolor{orange}{rgb}{1,0.6,0}  
\definecolor{olive}{rgb}{0.5,0.5,0}  
\definecolor{mylila}{rgb}{0.8,0.5,0.2}  
\definecolor{mygrey}{rgb}{0.6,0.6,0.6}  
\definecolor{mybrown}{rgb}{0.65,0.16,0.16}  
\definecolor{mymaroon}{rgb}{0.11,0.0,0.0}
\def \Domainspace{\mathcal{D}}  
\def \phx{\text{PHX} }
\def \phxk{\text{PHX}}
\def \phxs{\text{PHX}s }
\def \phxsk{\text{PHX}s}
\def \medium{M}
\def \fluid{F}
\def \outlet{O}
\def \inlet{I}
\def \bottom{B}
\def \interface{J}
\def \storage{S}
\def \Qin{Q^{\inlet}}
\def \QinC{\Qin_C}
\def \QinD{\Qin_D}
\def \Qg{Q^{G}}
\def \Qmm{Q^\medium}
\def \Qff{Q^\fluid}
\def \Dm{\Domainspace^\medium}
\def \Df{\Domainspace^\fluid}
\def \Dmf{\Domainspace^{\storage}}
\def \Din{\Domainspace^{I}}
\def \Dout{\Domainspace^{\outlet}}
\def \Dbottom{\Domainspace^\bottom}
\def \Dtop{\Domainspace^{T}}
\def \Dleft{\Domainspace^{L}}	
\def \Dright{\Domainspace^{R}}
\def \DInterface{\Domainspace^{\interface}}		
\def \DInterfaceL{\underline{\Domainspace}^{\interface}}
\def \DInterfaceU{{\overline{\Domainspace}}{}^{\interface}}
\def \rhom{\rho^\medium}
\def \rhof{\rho^\fluid}
\def \kappam{\kappa^\medium}
\def \kappaf{\kappa^\fluid}
\def \cp{c_p}
\def \cpm{\cp^\medium}
\def \cpf{\cp^\fluid}
\def \am{a^\medium}
\def \af{a^\fluid}
\def \gamam{\gamma^\medium}
\def \gamaf{\gamma^\fluid} 
\def \weightf{\weight^\fluid} 
\def \weightm{\weight^\medium} 
\def \alpham{\alpha^\medium}
\def \betaFm{\beta^{\fluid/\medium}}
\def \betam{\beta^{\medium}}
\def \betaf{\beta^{\fluid}}
\def \Nfm{\mathcal{N}^{\fluid \medium}}
\def \Nmed{\mathcal{N}^\medium}
\def \Nfluid{\mathcal{N}^\fluid}
\def \Ninter{\mathcal{N}^\interface}
\def \NinterL{\underline{\mathcal{N}}^\interface}
\def \NinterU{\overline{\mathcal{N}}^\interface}
\def \NinletB{\mathcal{N}_\inlet^{\mathcal{B}}}
\def \NoutB{\mathcal{N}_\outlet^{\mathcal{B}}}
\def \NbottomB{\mathcal{N}_\bottom^{\mathcal{B}}}
\def \NtopB{\mathcal{N}_T^{\mathcal{B}}}
\def \NleftB{\mathcal{N}_L^{\mathcal{B}}}
\def \NrightB{\mathcal{N}_R^{\mathcal{B}}}
\def \betam {\beta^\medium}
\def \betaf{\beta^\fluid}
\newcommand{\mycaption}[1]{\caption{\footnotesize #1}}
\newcommand{\ncol}{q}
\newcommand{\vconst}{\overline{v}_0}
\newcommand{\weight}{\psi}
\newcommand{\heattransfer}{\lambda^{\!G}}
\newcommand{\mat}[1]{{#1}}
\newcommand{\matgenel}{M}
\newcommand{\matgen}{\mat{\matgenel}}
\newcommand{\normalvec}{\mathfrak{n}}   
\newcommand{\dom}{\dagger}
\begin{document}
	
	\title{Short-Term Behavior of a Geothermal Energy Storage: Modeling and Theoretical Results  
}

\titlerunning{Short-Term Behavior of a Geothermal Energy Storage:  Modeling and Theoretical Results}        

\author{Paul Honore Takam  \and Ralf Wunderlich \and Olivier Menoukeu Pamen}

\authorrunning{P.H.~Takam, R.~Wunderlich, O.~Menoukeu Pamen} 

\institute{Paul Honore Takam \at
	Brandenburg University of Technology Cottbus-Senftenberg, Institute of Mathematics, P.O. Box 101344, 03013 Cottbus, Germany;  
	\email{\texttt{takam@b-tu.de}}           
	\and
	Ralf Wunderlich \at
	Brandenburg University of Technology Cottbus-Senftenberg, Institute of Mathematics, P.O. Box 101344, 03013 Cottbus, Germany;  
	\email{\texttt{ralf.wunderlich@b-tu.de}} 
	\and
	Olivier Menoukeu Pamen \at 
	University of Liverpool, Department of Mathematical Sciences, Liverpool L69 3BX, United Kingdom; 
	\email{\texttt{O.Menoukeu-Pamen@liverpool.ac.uk}} 
}

\date{Version of  \today}

\maketitle

\begin{abstract}
	This paper investigates numerical methods for simulations of  the short-term behavior of a geothermal energy storage. Such simulations are needed for the optimal control and management  of residential heating systems equipped with an underground  thermal storage. There a given volume under or aside of  a building is  filled with soil and insulated to the surrounding ground. The thermal energy is stored by raising the temperature of the soil inside the storage. It is charged and discharged via pipe heat exchangers filled with a moving fluid. Simulations of geothermal energy storages aim to determine how much energy  can be stored in or taken from the storage within a given  short period of time. The latter depends on the dynamics of the spatial temperature distribution in the storage	which is governed by a linear heat equation with convection and appropriate boundary and interface conditions.  We consider semi- and full discretization of that PDE  using  finite difference schemes and study associated stability problems. Numerical results  based on the derived methods are presented in the companion paper \cite{Takam2021NumResults}.
	
	\keywords{Geothermal storage\and Mathematical modeling \and  Heat equation  with convection \and  Finite difference discretization\and    Stability analysis}
	%
	\subclass{65M06 
		\and  65M12 
		\and 97M50 
	}	
\end{abstract}	
\section{Introduction}
Thermal storage facilities help to mitigate and to manage temporal fluctuations of  heat supply and demand for heating and cooling systems of single buildings as well as for district heating systems. They  allow heat to be stored in form of thermal energy and be used hours, days, weeks or months later. This is attractive  for space heating, domestic or process hot water production, or generating electricity. Note that  thermal energy may also be stored in the way of cold.  Thermal storages can significantly increase both the flexibility and the performance of district energy systems and enhancing the integration of intermittent  renewable energy 	sources into thermal networks (see Guelpa and  Verda \cite{guelpa2019thermal}, Kitapbayev et al.~\cite{KITAPBAYEV2015823}).	   Since heat production is still  mainly based on burning fossil fuels (gas, oil, coal) these are important contributions for the reduction of carbon emissions and an increasing energy independence of societies.

For an overview on thermal energy storages we refer to  Dincer and Rosen \cite{dincer2021thermal}. Zalba et al.~\cite{zalba2003review} provides a review  for the history of thermal energy storages with solid–liquid phase change and focused in three aspects: materials, heat transfer and applications. An overview of the  European and in particular the Spanish thermal energy storage potential
is presented in  Arce et al. \cite{arce2011overview}. The authors show that  thermal energy storages make an important contribution to the  reduction of  CO$_2$-emissions.  	 
In  Soltani et al. \cite{soltani2019comprehensive} the authors provide a comprehensive review on the evolution of geothermal energy production from its  beginnings to the present time by reporting production data from individual countries and
collective data of worldwide production. 		

The efficient operation of  geothermal storages requires a thorough design and planning because of the considerable  investment cost. For that purpose,  mathematical models and  numerical  simulations  are widely used. We refer to Dahash et al. \cite{dahash2020toward} and the references therein. In that paper the authors investigate  large-scale seasonal thermal energy storages allowing for buffering intermittent renewable heat production in district heating systems.  Numerical simulations are based on a multi-physics model of the thermal energy storage which was calibrated to measured data for a pit thermal energy storage in Dronninglund (Denmark). 
Another contribution is  Major et al. \cite{major2018numerical} which considers heat storage capabilities of deep sedimentary reservoirs. The governing heat and flow equations are solved using finite element methods.  Further, Regnier et al.~\cite{Regnier_et_al_2022} study the numerical simulation of aquifer thermal energy storages and focus on dynamic mesh optimisation for the finite element solution of the heat and flow equations.   For further contributions to the numerical simulation of such storages we refer  to \cite{bazri2022thermal,dincer2021thermal,haq2016simulated,li2022modelling,soltani2019comprehensive,wu2022enhancing}.

In this paper  we focus on  geothermal storages as depicted in  Fig.~\ref{fig:etank}.
\begin{figure}[!h]
	\centering
	\includegraphics[width=0.9\textwidth]{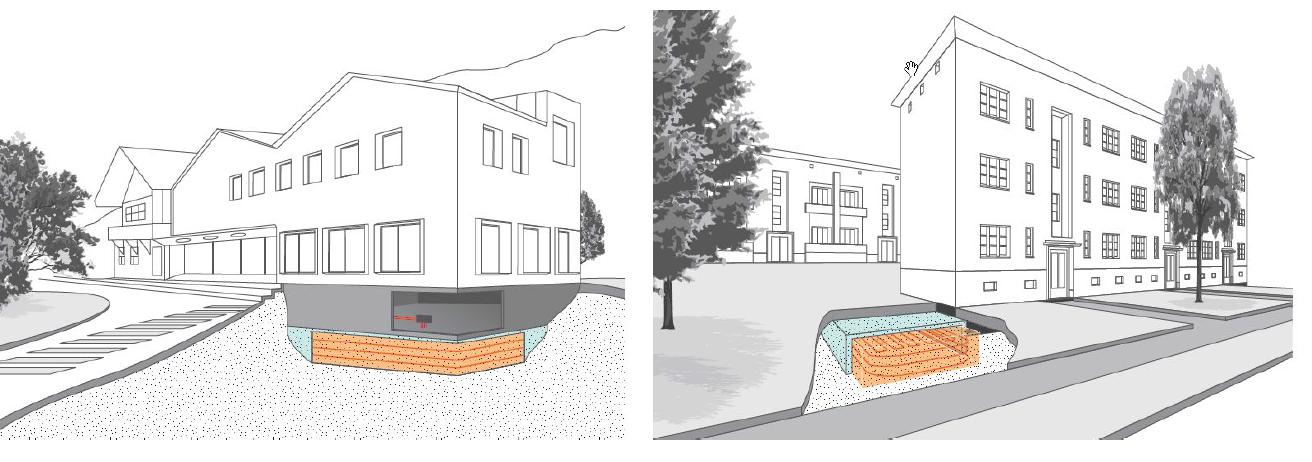}
	\caption[Geothermal storage]{Geothermal storage: in the new building, under a building (left) and in the renovation, aside of  the building (right), see
		\url{www.ezeit-ingenieure.eu}, \url{www.geo-ec.de}.}
	\label{fig:etank}
\end{figure}
Such storages gain more and more importance and are quite attractive for residential heating systems since  construction and maintenance  are relatively inexpensive. Furthermore, they can be integrated both in new buildings and in renovations. 
We will work with  a 2D-model of   a geothermal thermal energy storage, see Fig.~\ref{etank_longtermsimu},  where a defined volume under or aside of a building is filled with soil and insulated to the surrounding ground.  Thermal energy is stored by raising the temperature of the soil inside the storage. It is charged and discharged via pipe heat exchangers  (\phxk) filled with some fluid (e.g.~water). These \phxs can be connected to a short-term storage such as a water tank or directly to a solar collector and (heat) pumps move the fluid carrying the thermal energy.
A special feature of the storage in this work is that it is not insulated at the bottom such that thermal energy can also flow into deeper layers {  as it can be seen in Fig.~\ref{etank_longtermsimu}.} This can be considered as a natural extension of the storage capacity since that heat can to some extent be retrieved if the storage is sufficiently discharged  (cooled) and a heat flux back to storage is induced. Of course,  there are unavoidable diffusive losses  to the  environment  but  due to the open architecture, the geothermal storage can benefit from higher temperatures in deeper layers of the ground  and serve as a production unit  similar to a downhole heat exchanger. Note that in many regions in Europe the temperature in a depth of only 10 meter   is  almost constant around 10${}^\circ C$ over the year.	

\begin{figure}[h!]
	\centering
	\input{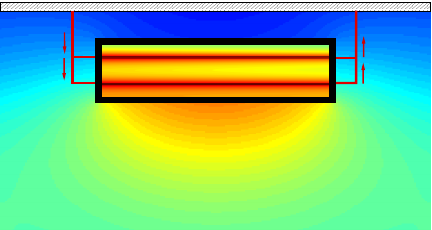tex_t}
	\mycaption{\label{etank_longtermsimu} 
		2D-model of a geothermal storage  insulated to the top and the sides  while open at the bottom and spatial temperature distribution.}
\end{figure}

Geothermal storages enable an extremely efficient operation of heating and cooling systems in buildings. Further, they can be used  to mitigate peaks in the electricity grid by converting electrical into heat energy (power to heat). Pooling several geothermal storages within the  framework of a virtual power plant gives the necessary  capacity which allows to participate in the balancing energy market.

This paper extends and complements the results in B\"ahr et al.~\cite{bahr2017fast,bahr2022efficient} where the authors focus  on the numerical simulation of the long-term behavior over weeks and months of the spatial temperature distribution in a geothermal storage and the interaction between a geothermal storage and its surrounding domain.  For simplicity charging and discharging was described by a simple source term  but not by \phxsk.

In the present work we  focus on the computation of the short-term behavior of the spatial temperature distribution. This is needed for storages embedded into residential heating systems and the study of the storage's  response to charging and discharging operations on time scales from a few minutes to a few days. We extend the setting in \cite{bahr2017fast,bahr2022efficient} and include \phxs  for a more realistic model of the storage's  charging and discharging process.  However,  for the sake of simplicity we do not consider the surrounding medium but  reduce the computational domain to  the storage  depicted in Fig.~\ref{etank_longtermsimu} by a  black rectangle. Instead we 
set appropriate boundary conditions to mimic the interaction between storage and  environment.   

For the management and control of a storage which is embedded into a residential heating system 	one needs to know 	 the amount of available thermal energy that can be stored in or extracted from the storage in a given short period of time. Such questions can only be answered if one knows the spatial temperature distribution,  in particular around the \phxsk. Charging and discharging is not efficient or even impossible if there are only small differences between the temperatures  inside and in the vicinity of the \phxsk. Long periods of (dis)charging may lead to saturation  in the vicinity of the \phxsk. As a consequence (dis)charging is no longer efficient and should be stopped since propagation of heat to regions away from the \phxs takes time.	

The  short-term behavior of  the spatial temperature distribution is governed by a linear heat equation with convection and appropriate boundary and interface conditions. We solve that PDE  using finite difference schemes, see Duffy \cite{duffy2013finite}.  For the convection terms we	apply  upwind techniques.
In a first step we study the semi-discretization with respect to spatial variables leading to a system of linear ODEs. In a second step, we consider full space-time discretization and   derive  implicit finite-difference schemes. The  current  paper provides the following theoretical  contributions. First, we prove that the chosen semi-discretization ensures a system of linear ODEs with a stable system matrix. Second, we provide a detailed stability analysis for the implicit finite-difference schemes of the fully discretized PDE and establish a stability condition. 

Numerical results  are devoted to  our  companion  paper  \cite{Takam2021NumResults}.
There we perform extensive numerical experiments, where simulations results for the temporal behavior of the spatial temperature distribution are used  to determine how much energy  can be stored in or taken from the storage within a given  short period of time. Special focus is laid on the dependence of these quantities on the arrangement of the \phxs within the storage.	
Further, we refer to another companion paper \cite{Takam2020Reduction} in which we apply model reduction techniques known from control theory such as balanced truncation to derive low-dimensional approximations of aggregated characteristics of the temporal behavior of the spatial temperature distribution. The latter is crucial if the geothermal storage is embedded into a  residential heating system 
and the cost-optimal management of such systems is studied mathematically in terms of optimal control problems.

The rest of the paper is organized as follows. In Sec.~\ref{ExternalStorage} we describe the dynamics of the spatial temperature distribution in the geothermal storage which is governed by a linear heat equation with a convection term and appropriate boundary and interface conditions. In Sec.~\ref{Discretization} we present the semi-discretization with respect to spatial variables  of the initial boundary value problem for that heat equation. 
For the resulting system of linear ODEs we show that the system matrix is stable. The full space-time discretization is studied in Sec.~\ref{sec:full_discrtization} where we derive implicit finite-difference schemes and provide the associated stability analysis.		 
An appendix provides a list of frequently used notations,  some technical details  of the finite difference scheme, auxiliary results from matrix analysis  as well as proofs 		 which were removed from the main text.

\section{ Dynamics of Spatial Temperature Distribution in a  Geothermal Storage}
\label{ExternalStorage}
In this section we describe  the dynamics of the  spatial temperature distribution in a geothermal storage mathematically by a linear heat equation with  convection term and appropriate boundary and interface conditions.

\subsection{2D-Model}
\label{model2D}

We assume that the domain of the geothermal storage is a cuboid and consider a two-dimensional rectangular cross-section.
We denote by  $Q=Q(t,x,y)$ the temperature at time $t \in [0,T]$ in the point $(x,y)\in \Domainspace=(0,l_x) \times (0,l_y)$ with $l_x,l_y$ denoting the width and height of the storage. 
The domain $\Domainspace$ and its boundary $\partial \Domainspace$  are depicted  in Fig.~\ref{bound_cond}.  $\Domainspace$ is divided into three parts. The first is $\Dm$ and is  filled with a homogeneous medium (soil) characterized by constant  material parameters $\rhom, \kappam$ and $\cpm$ denoting  mass density,    thermal conductivity and   specific heat capacity, respectively. The second is $\Df$,  it represents the \phxs filled with a fluid (water) with constant material parameters $\rhof, \kappaf$ and $\cpf$. The fluid moves with time-dependent velocity $v_0(t)$ along the \phxk. For the sake of simplicity we  restrict ourselves to the case, often observed in applications, where the pumps moving the fluid are either on or off. Thus the velocity $v_0(t)$ is piecewise constant taking  values $\vconst>0$ and zero, only.  Finally, the third part is the interface $\DInterface$ between $\Dm$ and $\Df$. That interface is split into  upper and lower interfaces $\DInterfaceU$ and $\DInterfaceL$, respectively. Observe that we neglect modeling the wall of the \phx and suppose perfect contact between the \phx and the soil. Details are given  in \eqref{Interface} and \eqref{eq: 13f} below. Summarizing we make the following 
\begin{figure}[h!]			
	\begin{center}
		\includegraphics[width=0.8\linewidth,height=.6\linewidth]{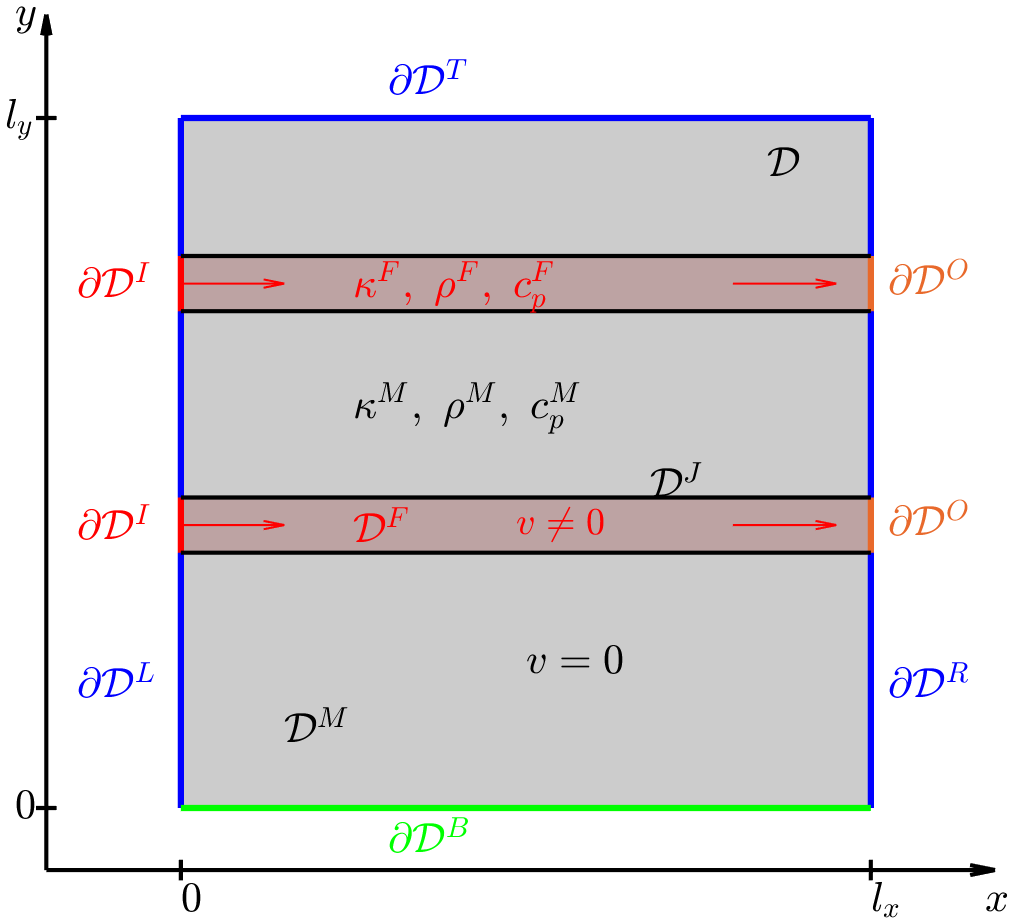}
	\end{center}
	\mycaption{\label{bound_cond} 2D-model of the geothermal storage:  decomposition of the domain $\Domainspace$ and the boundary $\partial \Domainspace $.}			
\end{figure}
\begin{assumption}
	\label{assum1}~
	\begin{itemize}
		\item [1.] Material parameters of the medium   $\rhom, \kappam, \cpm$ in the domain $\Dm$  and of the fluid  $\rhof, \kappaf, \cpf$ in the domain  $\Df$ are constants.
		\item [2.] Fluid velocity is piecewise constant, i.e. $v_0(t)=\begin{cases}
			\vconst>0,  &\text{pump~on},\\
			0, & \text{pump off}.
		\end{cases}$
		\item [3.] Perfect contact at the interface between fluid and  medium.
	\end{itemize}
\end{assumption}

\paragraph{Heat equation}
The temperature $Q=Q(t,x,y)$ in the external storage is governed by the linear heat equation with convection term
\begin{align}
	\rho c_p \frac{\partial Q}{\partial t}=\nabla \cdot (\kappa \nabla Q)-
	\rho v \cdot \nabla (c_p Q),\quad  (t,x,y) \in (0,T]\times \Domainspace \backslash \DInterface,   \label{heat_eq}
\end{align}
where $\nabla=\big(\frac{\partial}{\partial x},\frac{\partial}{\partial y}\big)$ denotes the gradient operator. The first term  on the right hand side describes diffusion while the second represents convection of the moving fluid in the \phxsk.
Further,  
$v=v(t,x,y)$ $=v_0(t)(v^x(x,y),v^y(x,y))^{\top}$  denotes  the velocity vector with $(v^x,v^y)^\top$ being the normalized directional vector of the flow. 
According to Assumption \ref{assum1} the material parameters $\rho,\kappa, c_p$ 
depend on the position $(x,y)$ and take the values $\rhom,\kappam, \cpm$ for points in $\Dm$ (medium) and  $\rhof, \kappaf, \cpf$ in  $\Df$ (fluid).

Note that there are no sources or sinks inside the storage and therefore the above heat equation appears without forcing term.
Based on this assumption, the heat equation (\ref{heat_eq}) can be written as
\begin{align}
	\frac{\partial Q}{\partial t}=a\Delta Q-
	v \cdot \nabla Q,\quad  (t,x,y) \in (0,T]\times \Domainspace \backslash \DInterface,   \label{heat_eq2}
\end{align}
where $\Delta=\frac{\partial^2}{\partial x^2}+\frac{\partial^2}{\partial y^2}$ is the Laplace operator and  $ a=a(x,y)$ is the thermal diffusivity which is piecewise constant with values  $a^\dom=\frac{\kappa^\dom}{\rho^\dom \cp^\dom}$  with $\dom=\medium$ for  $(x,y)\in \Dm $ and $\dom=\fluid$   for  $(x,y)\in \Df$, respectively.  
The initial condition $Q(0,x,y)=Q_0(x,y)$ is given by the initial temperature distribution $Q_0$ of the storage.

\begin{remark}\label{rem_heat_exchanger}
	In real-world geothermal storages \phxs  are often designed in a snake form located in the storage domain at multiple horizontal layers. There may be  only a single inlet and a single outlet. We will mimic that design by a computationally more tractable design characterized by multiple horizontal straight \phxs as it is sketched in Fig.~\ref{etank_longtermsimu}.
	This allows to control   the \phxs in different layers separately. For a  topology with single inlet and  outlet snake-shaped \phxs  the outlet of a straight \phx in one layer  can be connected with the inlet of the straight \phx in the next layer. 
\end{remark}

\subsection{Boundary and Interface Conditions}
For the description of the boundary conditions we decompose the boundary $\partial\Domainspace$ into several subsets as depicted in Fig.~\ref{bound_cond} representing the insulation on the top and the side, the open bottom, the inlet and outlet of the \phxsk.  Further, we have to specify conditions at the interface between  \phxs and  soil. The inlet, outlet and the interface  conditions model the heating and cooling of the storage via \phxsk. We distinguish between the two regimes 'pump on' and 'pump off'. For simplicity we assume perfect insulation at inlet and outlet if the pump is off.
This leads to the following boundary conditions.

\begin{itemize}				
	\item \textit{Homogeneous Neumann condition} describing perfect insulation on the top  and the side 
	\begin{align}\frac{\partial Q}{\partial \normalvec }=0,\qquad (x,y)\in 
		\partial \Dtop \cup \partial \Dleft  \cup \partial \Dright , 
		\label{Neumann}
	\end{align}
	where $\partial \Dleft =\{0\} \times  [0,l_y] \backslash \partial \Din $, ~
	$\partial \Dright =\{l_x\} \times  [0,l_y] \backslash \partial \Dout$, $\partial \Dtop = [0,l_x] \times \{l_y\}$   and $\normalvec$ denotes the outer-pointing normal vector.
	\item \textit{Robin condition} describing heat transfer at the bottom 
	\begin{align}
		-\kappam\,\frac{\partial Q}{\partial \normalvec }=\heattransfer(Q-\Qg(t)), \qquad  (x,y)\in 
		\partial \Dbottom ,
		\label{Robin}
	\end{align}
	with $\partial \Dbottom = [0,l_x] \times \{0\}$, where $\heattransfer>0$ denotes the  heat transfer coefficient  and $\Qg(t)$ the underground temperature.
	For more interpretation we refer to Remark \ref{rem:Robin}.
	\item  \textit{Dirichlet condition} at the inlet if the pump is on ($v_0(t)>0$), i.e.~the fluid arrives at the storage with a given temperature $\Qin(t)$. If pump is off ($v_0(t)=0$), we set a homogeneous Neumann condition describing perfect insulation.
	\begin{align}
		\begin{cases}
			\begin{array}{rll}	
				Q &=\Qin(t), &\text{ ~pump on,} \\
				\frac{\partial Q}{\partial \normalvec }&=0, &\text{ ~pump off,} 
			\end{array}
		\end{cases} 
		\qquad  (x,y)\in 	\partial \Din .
		\label{input}
	\end{align}
	
	\item \textit{``Do Nothing'' condition} at the outlet in the following sense. If  the  pump is on ($v_0(t)>0$) then the total heat flux directed outwards can be decomposed into a diffusive  heat flux given by $\kappaf\frac{\partial Q}{\partial \normalvec }$ and a convective  heat flux given by $v_0(t) \rhof \cpf Q$. Since in real-world applications the latter is much larger than the first we neglect the diffusive heat flux. This leads to a homogeneous Neumann condition
	\begin{align}\frac{\partial Q}{\partial \normalvec }=0,\qquad (x,y)\in 
		\partial \Dout . 
		\label{output}
	\end{align}
	If the pump is off then we  assume (as already for the inlet) perfect insulation which is also described by the above condition.
	
	\item \textit{Smooth heat flux} at interface $\DInterface$ between fluid and soil leading to a coupling condition
	\begin{align}
		\kappaf\,\frac{\partial \Qff }{\partial \normalvec  }=\kappam\,\frac{\partial \Qmm }{\partial \normalvec  },
		\qquad  (x,y)\in \DInterface.
		\label{Interface}
	\end{align}	
	Here, $\Qff , \Qmm $ denote the temperature of the fluid inside the \phx and of the soil outside the \phxk, respectively.
	Moreover, we assume that the contact between the \phx and the medium is perfect which leads to a smooth transition of a temperature, i.e., we have 
	\begin{align}
		\Qff =\Qmm ,\qquad  (x,y)\in \DInterface. \label{eq: 13f}
	\end{align} 
\end{itemize}
\begin{remark}
	If the contact between the \phx and the medium is not perfect (e.g., in case of contact resistance) then the transition of the temperature at the interface $\DInterface$ will not be smooth, that is, $\Qff  \neq \Qmm .$
	This leads to a temperature jump between the \phx and the medium. That phenomenon occurs in the heat transfer between the medium and an insulation as shown in \cite{bahr2017fast,bahr2022efficient}. 
\end{remark}

\begin{remark}\label{rem:Robin}
	Imposing the Robin condition \eqref{Robin} at the bottom boundary aims to  mimic the thermal behavior at the bottom boundary. A more realistic description requires embedding the storage domain $\Domainspace$ into a larger computational domain including the  surrounding  regions as  in Fig.~\ref{etank_longtermsimu}. This allows for warming and cooling in the vicinity of the storage resulting from  the outflow and inflow of the storage heat.  Contrary to that, condition \eqref{Robin} assumes an exogenously given underground temperature $\Qg$ independent of the temperature in the storage.
	
	The  heat transfer coefficient $\heattransfer$ describes the  resistance to the heat flux at the boundary. For the limiting case $\heattransfer\to 0$ we get a homogeneous Neumann condition, i.e., perfect insulation, while in the limit for $\heattransfer\to \infty$ condition \eqref{Robin} is the Dirichlet condition $Q=\Qg(t)$.
	The underground temperature in general shows seasonal fluctuations which can be described by
	$
	\Qg(t)=K^G_1 \cos\big(\frac{2\pi (t-t_0)}{T_{a}}\big)+K^G_2, 
	$ 
	where $K^G_1$ is the intensity of the fluctuation, $K^G_2$ is the average ground temperature,  $t_0$ a time or phase shift and $T_a$ the number of time units per year. Since our focus is on the short-term behavior, we assume in the sequel that the underground temperature is constant over time, i.e. $K^G_1 =0$. 
\end{remark}	

\section{Semi-Discretization of the Heat Equation}
\label{Discretization}
This and the next section are devoted to the finite difference discretization of  the heat equation \eqref{heat_eq2} with the boundary and interface conditions \eqref{Neumann} through \eqref{eq: 13f}.  
We proceed in two steps. In the first step we apply semi-discretization in space and approximate only spatial derivatives by their respective finite differences.  This approach is also known as 'method of lines' and leads  to a high-dimensional system of ODEs with a stable system matrix for the temperatures at the grid points.  The latter will be used as starting point for model reduction  in our paper \cite{Takam2020Reduction}. In the second step, see Sec.~\ref{sec:full_discrtization}, also time is discretized  resulting in  a family of implicit finite difference schemes  for which we perform a stability analysis.

\subsection{Semi-Discretization of the Heat Equation}
\label{Semi-Discret}
We now apply the finite difference method (see Duffy \cite{duffy2013finite}) combined with upwind techniques for the convection terms
for semi-discretization  of the heat equation (\ref{heat_eq2}). 

\begin{figure}[h!]
	\centering
	\begin{center}
		\resizebox{0.7\textwidth}{0.5\textwidth}{%
			\begin{tikzpicture}[thick,scale=1, every node/.style={scale=1}]
				\draw[thick,->] (-1.5,-1.5) -- (8.7,-1.5) node[anchor=north west] {x};
				\draw[thick,->] (-1.5,-1.5) -- (-1.5,8.3) node[anchor=south east] {y};
				\draw[step=1.5cm,black,very thin] (-1.5,-1.5) grid (7.5,7.5);					
				\fill (3,3) circle (3pt);
				\node at (3.5,2.7) {$(i,j)$};
				\fill (3,4.5) circle (3pt);
				\node at (3.8,4.8) {$(i,j+1)$};
				\fill (3,1.5) circle (3pt);
				\node at (3.8,1.2) {$(i,j-1)$};
				\fill (4.5,3) circle (3pt);
				\node at (5.3,2.7) {$(i+1,j)$};
				\fill (1.5,3) circle (3pt);
				\node at (2.2,2.7) {$(i-1,j)$};
				\node at (7.7,-2) {$l_x=N_xh_x$};
				\node at (-2.6,7.5) {$l_y=N_yh_y$};
				\fill (7.5,7.5) circle (3pt);
				\node at (8.3,7.75) {$(N_x,N_y)$};
				\fill (-1.5,-1.5) circle (3pt);
				\node at (-2.1,-1.8) {$(0,0)$};	
				\fill (-1.5,7.5) circle (3pt);
				\node at (-0.8, 7.75) {$(0,N_y)$};
				\fill (7.5,-1.5) circle (3pt);
				\node at (8.2,-1.1) {$(N_x,0)$};
			\end{tikzpicture}
		} 
	\end{center}
	\mycaption{\label{grid}Computational grid.}	
\end{figure}
Let $N_x$ and $N_y$ be the number of grid points  and  $h_x={l_x}/{N_x}$ and $h_y={l_y}/{N_y}$  the step sizes in $x$-direction and $y$-direction, respectively.  
The spatial domain is discretized by means of a mesh   with grid points  $(x_i,y_j)$ as shown in Fig.~\ref{grid} where
\begin{align*}
	x_i =ih_x, ~~~y_j =jh_y,\quad i ={ 0},...,N_x, ~~~j ={ 0},...,N_y.
\end{align*}

We denote by $Q_{ij}(t)\simeq Q(t,x_i,y_j)$ the semi-discrete approximation  of the temperature   and by $v_0(t)(v^x_{ij},v^y_{ ij})^{\top}=v_0(t)(v^x(x_i,y_j),v^y(x_i,y_j))^{\top}=v(t,x_i,y_j)$  the velocity vector  at the grid point $(x_i,y_j)$ at time $t$. Further, we introduce the following sets of indices
\begin{align*}
	\mathcal{N}_x&=\{1,...,N_x-1\}, ~~~
	\mathcal{N}_y=\{1,...,N_y-1\},\\
	\Nmed &=\{(i,j): ~(i,j) \in \mathcal{N}_x \times \mathcal{N}_y   ~~\text{with} ~~(x_i,y_j) \in \Dm \},\\
	\Nfluid &=\{(i,j):~ (i,j) \in \mathcal{N}_x \times \mathcal{N}_y   ~~\text{with} ~~(x_i,y_j) \in \Df \},\\
	\Ninter &=\{(i,j):~ (i,j) \in \mathcal{N}_x \times \mathcal{N}_y   ~~\text{with} ~~(x_i,y_j) \in \DInterface \},\\
	\mathcal{N^{\mathcal{B}}}&=\{(i,j):~ (i,j) \in \{0,...,N_x\} \times \{0,...,N_y\}   ~~\text{with} ~~(x_i,y_j) \in \mathcal{\partial D} \},
\end{align*}
which we identify with the corresponding sets of grid points.
We denote by $\mathcal{N}^{\storage} =\Nfluid \cup \Nmed $ the set of grid points in the inner domain $\Dmf=\Df \cup \Dm$. Further, we decompose  the set of grid points  on the interface $\DInterface=\DInterfaceL \cup \DInterfaceU$
between the fluid and medium into $\Ninter =\NinterL \cup \NinterU$.  Here,  $\DInterfaceL$ and $\DInterfaceU$ denote the lower and upper interface, respectively, see Fig.~\ref{bound_cond}. Further, we decompose the set $\mathcal{N^{\mathcal{B}}}$ of grid points on the boundary domain $\partial \Domainspace$  according to the decomposition of $\partial \Domainspace$ given in Fig.~\ref{bound_cond} into  $\mathcal{N}^{\mathcal{B}}=\NinletB \cup \NoutB \cup \NleftB \cup \NrightB \cup \NtopB \cup \NbottomB $.

The spatial derivatives in the PDE (\ref{heat_eq}) are approximated
by linear combinations of values of $Q$ at the grid points $(x_i,y_j)$  in $\Dmf$ at time $t$. 
We use  central second-order  finite difference for the diffusion term: 
\begin{align*}
	\frac{\partial^2 Q(t,x_i,y_j)}{\partial x^2} = \frac{Q_{i+1,j}(t)-2Q_{ ij}(t)+Q_{i-1,j}(t)}{h^2_x}+\mathcal{O}(h^2_x), \\
	\frac{\partial^2 Q(t,x_i,y_j)}{\partial y^2} = \frac{Q_{i,j+1}(t)-2Q_{ ij}(t)+Q_{i,j-1}(t)}{h^2_y}+\mathcal{O}(h^2_y). 
\end{align*}	
For the convection term we use the upwind discretization to get 	
\begin{align*}
	v^x(x_i,y_j)\frac{\partial Q(t,x_i,y_j)}{\partial x} &= v_{ ij}^x \mathds{1}_{\{v_{ ij}^x>0\}} \frac{Q_{ ij}(t)-Q_{i-1,j}(t)}{h_x}  \nonumber \\& 
	+ v_{ij}^x \mathds{1}_{\{v_{ ij}^x<0\}} \frac{Q_{i+1,j}(t)-Q_{ ij}(t)}{h_x}+\mathcal{O}(h_x), 
	\\
	v^y(x_i,y_j)\frac{\partial Q(t,x_i,y_j)}{\partial y} &= v_{ ij}^y \mathds{1}_{\{v_{ ij}^y>0\}} \frac{Q_{ ij}(t)-Q_{i,j-1}(t)}{h_y} \nonumber \\&
	+v_{ ij}^y \mathds{1}_{\{v_{ ij}^y<0\}} \frac{Q_{i,j+1}(t)-Q_{ ij}(t)}{h_y}+\mathcal{O}(h_y). 
\end{align*}
We have to point out that theabove upwind approximations 
of the convection terms need to be  applied only to the set of grid points  $\Nfluid $ in the fluid domain $\Df $, since there is no convection outside the fluid  and we can set $v_{ij}^x=v_{ij}^y=0$.

For the sake of simplification and tractability of our analysis we restrict ourselves  to the following assumption on the arrangement of \phxs and impose  conditions on the location of grid points  along the \phxsk.
\begin{assumption}\label{assum2}~%
	\begin{enumerate}
		\item  There are $n_P \in \N$  straight horizontal \phxsk, the fluid moves in positive $x$-direction.
		\item The interior of \phxs contains grid points.
		\item Each interface between medium and fluid contains grid points.
	\end{enumerate}		
\end{assumption}	
Then for grid points in the domain
$\Dmf $  the semi-discrete scheme is given by

\begin{align}
	\frac{dQ_{ij}(t)}{dt} &~= \alpha_{ij}^{+}(t) Q_{i+1,j}(t)+\alpha_{ij}^{-}(t) Q_{i-1,j}(t)+\beta_{ij}^{+}(t) Q_{i,j+1}(t) +\beta_{ij}^{-}(t) Q_{i,j-1}(t)\nonumber\\&~~~~+\gamma_{ij}(t) Q_{ ij}(t). \label{scheme} 
\end{align}
For grid points   $(i,j) \in \Nfluid $ in the  ``fluid'' domain $\Df$    Assumption \ref{assum2} implies that $v_{ij}^x=1$ while $v_{ij}^y=0$ and the above coefficients are given by
\begin{align}		
	\label{coeff_schem}
	\begin{array}{rl}
		\alpha_{ij}^{+}(t)&=\alpha^{F+}=\frac{\af }{h^2_x}, \quad \alpha_{ij}^{-}(t)=\alpha^{F-}(t)=\frac{\af }{h^2_x}+\frac{v_0(t)}{h_x}, \quad 
		\beta_{ij}^{\pm}(t)=\betaf=\frac{\af }{h^2_y},\\
		\gamma_{ij}(t)&=\gamaf (t)= -2\af \Big(\frac{1}{h^2_x}+\frac{1}{h^2_y}\Big)-\frac{v_0(t) }{h_x},\qquad \text{with }~\af =\frac{\kappaf}{\rhof \cpf}.			
	\end{array}			 
\end{align}
In the  ``medium'' domain $\Dm$ the convection terms disappear and the coefficients of the scheme (\ref{scheme}) become time-independent and are given for $(i,j) \in \Nmed ,$ by 
\begin{align}
	\label{coeff_schem_medium}
	\alpha^\pm_{ij}(t)=\alpham = \frac{\am }{h^2_x},  ~~ \beta^\pm_{ ij}(t)=\betam =\frac{\am }{h^2_y},~~\gamma_{ij}(t)=\gamam = -2\am \Big(\frac{1}{h^2_x}+\frac{1}{h^2_y}\Big),	
\end{align}
and $\am =\frac{\kappam}{\rhom \cpm}$.
Note that for the grid points in the neighborhood of the interfaces we have to slightly modify the above scheme (\ref{scheme}) due to the extra contribution from the interfaces, see equations (\ref{medium_I}) and (\ref{fluid_I}) below.

\subsection{Semi-Discretization of the Boundary Conditions}
In this paragraph we consider the discretization of boundary conditions. We start with the  homogeneous Neumann conditions \eqref{Neumann} and \eqref{output} for the top, left, right and the outlet boundary, where the normal vector  $\normalvec $ is equal to $(0,1)^\top, (-1,0)^\top$, $(1,0)^\top$ and $(1,0)^\top$, respectively. 
Using first-order differences for the normal derivative we obtain for all $t \in [0,T]$

\begin{align}
	\begin{cases}
		\begin{array}{rllcl}
			Q_{iN_y}(t)&=Q_{iN_y-1}(t) &\qquad \text{for } &(i,N_y) &\in \NtopB,	\\[0.5ex]
			Q_{0j}(t) &=Q_{1j}(t)   &\qquad  \text{for } &(0,j) &\in \NleftB,	\\[0.5ex]
			Q_{N_xj}(t)& =Q_{N_x-1j}(t) &\qquad  \text{for }& (N_x,j) &\in \NrightB  \cup \NoutB.	
		\end{array}
	\end{cases} \label{condition_1}
\end{align}
Next, we discretize the Robin condition \eqref{Robin} at the bottom boundary $\partial \Dbottom $. We have $\normalvec =(0,-1)^\top$ such that for all grid points  $ (i,0) \in \NbottomB $, we have for all $t \in [0,T]$
\begin{align}
	Q_{i0}(t)=\frac{\kappam}{\kappam+\heattransfer h_y} Q_{ i1}(t)+\frac{\heattransfer h_y}{\kappam+\heattransfer h_y}\Qg(t).  \label{condition_2}  
\end{align}
On the inlet boundary $\partial \Din $ we have  according to \eqref{input} a Dirichlet boundary condition during pumping and a Neumann condition if the pump is off. Then for all grid points $ (0,j) \in \NinletB$, we have  $\normalvec =(-1,0)^\top$ which implies for all $t \in [0,T]$ 
\begin{align}
	\begin{cases}
		Q_{0j}(t)	=\Qin(t), & \qquad \text{ if pump~on}, \\ 
		Q_{0j}(t)=Q_{1j}(t),	& \qquad \text{ if pump~~off}. 
	\end{cases} \label{condition_3}
\end{align}
The relations \eqref{condition_1} through \eqref{condition_3} represent linear algebraic equations which allow to express the grid values $Q_{ij}(t)$ in the boundary grid points  $(i,j) \in \mathcal{N}^{\mathcal{B}}$  in terms of the corresponding values in the neighboring points in the interior of the domain  and the input data to the boundary conditions. Thus, in the finite difference scheme  these values $Q_{ij}(t)$ can be removed from the set of unknowns.

\subsection{ Semi-Discretization of Interface Condition}
\label{interface_discrete}
Now we consider grid points on the interface $\DInterface$ between  fluid and medium which are by Assumption \ref{assum1} straight lines in $x$-direction. That interface can be decomposed as  $\DInterface=\DInterfaceL \cup \DInterfaceU$, with $\DInterfaceL$ and $\DInterfaceU$ representing the lower and upper interface, respectively, see Fig.~\ref{Interface1}. 
\begin{figure}[h!]
	\begin{center}
		\resizebox{0.6\textwidth}{0.18\textwidth}{%
			\begin{tikzpicture}[thick]
				\shade[top color=red!20, bottom color=red!20]
				(0,0) rectangle (8,1);
				
				\draw[thick] (0,0) -- (8.0,0) node[anchor= west] {~~$\DInterfaceL$~~ Lower interface};
				\draw[thick] (0,1) -- (8,1) node[anchor= west] {~~$\DInterfaceU$~~ Upper interface};
				\node at (1.8,0.5) {$(i,j+1)$};
				\node at (1,0.5) {$\bullet$};
				\node at (1.5,-0.25) {$(i,j)$};
				\node at (1,0) {$\bullet$};
				\node at (1.8,-0.7) {$(i,j-1)$};
				\node at (1,-0.5) {$\bullet$};
				
				\node at (6.8,0.4) {$(i,j-1)$};
				\node at (6,0.5) {$\bullet$};
				\node at (6.5,1.2) {$(i,j)$};
				\node at (6,1) {$\bullet$};
				\node at (6.8,1.7) {$(i,j+1)$};
				\node at (6,1.5) {$\bullet$};
				\node at (4,1.5) { Soil $~~~\kappam$};
				\node at (4,-.5) { Soil $~~~\kappam$};
				\node at (4,0.5) { Fluid $~~~\kappaf$};
			\end{tikzpicture}
		}	
	\end{center}
	\mycaption{\label{Interface1} Interface between the fluid and soil.}	
\end{figure}
We define the outer normal  by $\normalvec =(0,1)^\top$ on the  upper interface and by $\normalvec =(0,-1)^\top$ for lower interface. Note that we have $n_P$ \phxs and each \phx has two interfaces. Then, we have in total $2 n_P$ interface subdomains. 

For a grid point  $(x_i,y_j)$  on the interface $\DInterface$  the perfect contact condition \eqref{eq: 13f} implies that at a given time $t$ the temperature of the fluid $ \Qff (t,x_i,y_j)$   is equal to the temperature $\Qmm (t,x_i,y_j)$ of the medium at that point. { As usual, $Q_{ij}(t)$ denotes the semi-discrete approximation of that temperature. Then discretization of the interface condition (\ref{Interface}) leads to	
	\begin{align*}
		& \kappaf\frac{ Q_{ij}(t)-Q_{i,j+1}(t)}{h_y}=\kappam\frac{ Q_{i,j-1}(t)-Q_{ ij}(t)}{h_y} \quad~~\text{for lower interface} \nonumber\\ &\kappam\frac{Q_{i,j+1}(t)-Q_{ij}(t)}{h_y}=\kappaf\frac{Q_{ ij}(t)-Q_{i,j-1}(t)}{h_y} \quad~~\text{for upper interface}.
	\end{align*}
	We obtain the  following coupling between the grid values in an interface  grid point $(i,j) \in \Ninter $ and its  neighbors  in vertical direction a time  $ t \in [0,T]$,
	\begin{align}
		\nonumber
		Q_{ij}(t)&=\weightf  Q_{i,j+1}(t)+ \weightm  Q_{i,j-1}(t),~~\quad  (i,j) \in \NinterL,
		\\
		\label{interface_interpol} 			
		Q_{ij}(t)&=\weightf  Q_{i,j-1}(t)+ \weightm  Q_{i,j+1}(t), ~~\quad  (i,j) \in \NinterU,  \\
		\nonumber
		\text{where}\quad \weightf & =\frac{\kappaf}{\kappaf+\kappam}\quad  and  \quad \weightm =1-\weightf.
	\end{align}	
	The above relations show that  the grid values $Q_{ij}(t)$ in the interface grid points  $(i,j) \in \Ninter $ can be expressed as linear combinations  of the grid  values in the two vertical neighboring points in the fluid and medium. Thus, in the finite difference scheme  these values $Q_{ij}(t)$ can be removed from the set of unknowns.
	Now, let   $(i,j)\in \NinterL$ be an interface point on the lower interface. Then  substituting the above expressions for $Q_{ij}(t)$   into the finite differences scheme \eqref{scheme}  applied to the lower neighbor $(i,j-1)\in\Nmed $ in the medium leads to 
	
	\begin{align}
		\frac{d}{dt} Q_{i,j-1}(t) 
		& = \alpham  Q_{i+1,j-1}(t)+\alpham  Q_{i-1,j-1}(t)+\betam  Q_{i,j-2}(t) +\betam _I Q_{i,j+1}(t) + \gamam _I Q_{i,j-1}(t)	  
		\nonumber \\\
		\text{with}\quad \betam _I &  =\weightf \betam  \quad\text{and}\quad   \gamam _I= \gamma+\weightm \betam ,
		\label{medium_I}
	\end{align}
	whereas for the upper neighbor $(i,j+1)\in\Nfluid $ in the fluid it holds 
	\begin{align}
		\frac{d}{dt} Q_{i,j+1}(t)\!
		& = \alpha^{F+} \!Q_{i+1,j+1}(t)+\!\alpha^{F-} \!Q_{i-1,j+1}(t)\!+\betaf Q_{i,j+2}(t)\!+ \betaf_I Q_{i,j-1}(t)\! +\gamaf _I  Q_{i,j+1}(t) 
		\nonumber \\
		\text{with}\quad \betaf_I &  =\weightm \betaf \quad\text{and}\quad   \gamaf _I=\gamma+\weightf \betaf. \label{fluid_I}
	\end{align}
	Similar expressions can be derived for points  $(i,j)\in \NinterU$  on the upper interface.

	\subsection{Matrix Form of the Semi-Discrete Scheme}
	\label{sec: gs}
	
	We are now in a position to establish a semi-discretized version of the heat equation \eqref{heat_eq2} in terms of a system of ODEs by summarizing   relations (\ref{scheme}), (\ref{medium_I}) and (\ref{fluid_I}).
	To this end we recall that the temperature at the boundary grid  points can be obtained by the linear algebraic equations  \eqref{condition_1} through \eqref{condition_3} derived from the boundary conditions. Further,  the values at the interface points are obtained by the interpolation formulas in \eqref{interface_interpol} derived from the perfect contact condition. 
	Thus, we can exclude these grid points from the subsequent considerations where we collect the semi-discrete approximations of the temperature $Q(t,x_i,y_j)$ at the remaining points of the grid 
	in the vector function 
	$	Y(t)=(Y_1(t),Y_2(t),\ldots,Y_n(t))^T$.
	The enumeration of the entries of $Y$ is such that we start with the  first inner grid point $(1,1)$ next to the lower left corner of the domain. Then we number   grid points consecutively
	in vertical direction where we exclude the $2n_P$ points of the interfaces of the $n_P$ \phxs such that we have $\ncol=N_y-2n_P-1$ points in each ``column'' of the grid. Thus, 
	$Y_{(i-1)q+1}$ corresponds to grid point $(i,1)$ for $i=1,\ldots,N_x-1$, and the last entry $Y_n$ to the inner grid point $(N_x-1,N_y-1)$ next to the domain's upper right corner. The dimension of $Y$ is   $n=(N_x-1)\ncol=(N_x-1)(N_y-2n_P-1).$
	The enumeration described above  can be expressed formally by a mapping $\mathcal{K}:\Nfm \to \{1,\ldots,n \}$  with  $(i,j)\mapsto  l=\mathcal{K}(i,j)$ 
	which maps  pairs of indices $(i,j)$ of  grid point $(x_i,y_j) \in\Domainspace$ to the single index $l$ of the corresponding entry in the vector $Y$. 
	
	Using the above notations we can rewrite  relations (\ref{scheme}), (\ref{medium_I}) and (\ref{fluid_I}) as the  following system of ODEs for the vector function $Y$ representing the semi-discretized heat equation \eqref{heat_eq2} together with the given boundary and interface conditions.
	\begin{align}
		\frac{d Y(t)}{dt}= \mat{A}(t)Y(t)+\mat{B}(t)g(t), ~~t \in (0,T],
		\label{Matrix_form1}
	\end{align}
	with the initial condition $Y(0)=y_0$. Here, the vector $y_0\in \R^n$ contains the initial temperatures at the corresponding grid points with $y_{0l}=Q_0(x_i,y_j)$ where $l=\mathcal{K}(i,j), l=1\ldots,n$.   
	The system matrix  $\mat{A}$  results from the spatial discretization of the convection and diffusion term in the heat equation (\ref{heat_eq2}) together with the Robin and linear heat flux boundary conditions. It has tridiagonal structure  consisting of $(N_x-1)\times( N_x-1)$ block matrices of dimension $\ncol$ given by
	\begin{align}
		\label{matrix_A}
		\mat{A}=\begin{pmatrix}
			\mat{A}_{L} ~&~ \mat{D}^{+} ~& &&&\text{\LARGE0} \\
			\mat{D}^{-} ~&~ \mat{A}_{M} ~&~ \mat{D}^{+} \\
			& \mat{D}^- ~&~ \mat{A}_{M} ~&~ \mat{D}^{+} \\
			&& \ddots &\ddots & \ddots &\\
			& && \mat{D}^- ~&~ \mat{A}_{M}~ &~ \mat{D}^+ \\
			\text{\LARGE0}& & &&~ \mat{D}^- ~&~ \mat{A}_{R}
		\end{pmatrix}.
	\end{align}
	The inner block matrices $\mat{A}_{M}, i=2,\ldots N_x-2$ of dimension $\ncol$ have tridiagonal structure and are sketched for the case of one \phx in Table \ref{table:matrix}. 
	The matrix entries $\betaf,\gamaf $ are given in \eqref{coeff_schem}, $\betam ,\gamam $ in   \eqref{coeff_schem_medium},  $\betam _I,\gamam _I$ in \eqref{medium_I} and $\betaf_I,\gamaf _I$ in \eqref{fluid_I}. The first and last diagonal  entry   reads as
	$		
	\gamam _B= \gamam +\frac{\kappam }{\kappam+\heattransfer h_y}\betam  , \quad \gamam _T= \gamam +\betam,
	$		
	respectively. They  are obtained if the discretized top and bottom boundary conditions  \eqref{condition_1} and \eqref{condition_2} are substituted into \eqref{scheme}.	
	
	For  the matrices $\mat{A}_L$ and  $\mat{A}_R$ containing  entries resulting from the discretization of boundary conditions at the  left and right boundary  we refer to  Appendix \ref{append_block_matrices}.
	\begin{table}[h!]
		{\footnotesize
			\[
			~\mat{A}_{M}=\left(\begin{array}{ccccccccccccccc}
				\gamam _{B} & \betam  &&&&&&&&&&\multicolumn{3}{c}{\text{ \scriptsize Bottom Boundary }} & \\
				\betam  & \gamam & \betam &&&&&&&&&& \\
				&\ddots&\ddots&\ddots&&&&& &\text{\Large0}&&\multicolumn{3}{l}{\text{\scriptsize Medium}}\\
				&&\betam  & \gamam & \betam &&&&& \\
				&&&\betam  &\gamam _{I}	 &   \betam _I && &&&&\multicolumn{3}{l}{\text{\scriptsize Lower interface}} \\ \hline\\[-1.5ex]
				&&&& \betaf_I& \gamaf _{I}	 & \betaf & &&& \\
				&&&&&\betaf & \gamaf &~ \betaf&&\\
				&&&&&&\ddots&\ddots&\ddots& &&\multicolumn{3}{l}{\text{\scriptsize Fluid}}\\
				&&&&&&&\betaf & \gamaf & \betaf&& \\
				\multicolumn{3}{l}{\text{\scriptsize Upper interface}}&&&&&&\betaf & \gamaf _{I}	 &   \betaf_I&& \\ \hline \\[-1.5ex]
				&&&&&&&&& \betam_I&\!  \gamam _{I}	 & \betam &\\
				&&&&&&&&&&\betam  & \gamam & \betam & \\
				\multicolumn{3}{l}{\text{\scriptsize Medium}}&&&\text{\Large 0}&&&&&&\ddots&\ddots&\ddots&\\
				&&&&&&&&&&&& \betam  & \gamam & \betam  \\
				\multicolumn{3}{c}{\text{\scriptsize Top Boundary }}&&&&&& &&&& &\betam  &  \gamam _T
			\end{array} \right).\] 
		}		
		\mycaption{Sketch of inner block matrices $\mat{A}_{M}, i=2,\ldots,N_x-2$ for the case of one \phxk.}
		\label{table:matrix}
	\end{table}
	The lower and upper block matrices $\mat{D}^{\pm} \in \R^{\ncol \times \ncol}$, $i=1,\ldots, N_x-1$, are  diagonal matrices  of the form 
	\begin{align}
		\label{mat_D}
		\mat{D}^{\pm}=\mat{D}^{\pm}(t)=\diag{(\alpham ,\ldots,\alpham |\alpha^{F \pm}(t),\ldots,\alpha^{F \pm}(t)| \alpham ,\ldots,\alpham )},
	\end{align} 
	where $\alpham $ is given in \eqref{coeff_schem_medium} and $\alpha^{F \pm}$ in    \eqref{coeff_schem}.
	Here, we denote by $|$ the location of the interfaces where we only sketched  the case of one \phxk.   For the convenience of the reader we provide a comprehensive list of all  entries of matrix \mat{A} showing the dependence on model and discretization parameters in Appendix \ref{append_properties_A}.

	The   $n\times 2$ input matrix $\mat{B}$ is a result from the discretization of the  inlet and Robin boundary conditions, its entries $B_{lr}, ~l=1,\ldots,n,~~ r=1,2,$ are derived in Appendix \ref{append_block_matrices} and are given by 
	\begin{align}\label{eq:input_matrix}
		\begin{array}{rl@{\hspace*{2em}}l}
			B_{l1}&=B_{l1}(t)=\begin{cases}
				\frac{\af }{h^2_x}+\frac{\vconst}{h_x},  & \text{pump on,}\\
				0, & \text{pump off,}
			\end{cases}
			& l=\mathcal{K}(1,j), (0,j)\in\NinletB,\\[3ex]	
			B_{l2}& = \frac{\heattransfer h_y}{\kappam+\heattransfer h_y}\betam ,
			&  l=\mathcal{K}(i,1), (i,0)\in\NbottomB.
		\end{array}		
	\end{align}   
	The entries for other $l$ are zero.
	The input function  $g:~[0,T] \to \R^2$  is defined by 
	\begin{align}
		g(t)=\begin{cases}
			(\Qin(t),~\Qg(t))^{\top}, & \quad \text{pump on},\\
			~~~~~~~(0,~\Qg(t))^{\top}, & \quad \text{pump off}.
		\end{cases}
		\label{eq:input}
	\end{align}
	Recall that  $\Qin$ is the inlet temperature  of the \phx during pumping  and $\Qg$ is the underground temperature.		
	\subsection{Stability of Matrix $\mat{A}$}
	\label{stability_A}	
	The finite difference  semi-discretization of the heat equation \eqref{heat_eq2} given by the  system of ODEs \eqref{Matrix_form1} is expected to preserve the dissipativity of the PDE. This property is related to the stability of the system matrix $\mat{A}=\mat{A}(t)$  in the sense that all eigenvalues  of $\mat{A}$ lie in the left open complex half plane. That property will play a crucial role   for model reduction techniques for \eqref{Matrix_form1} based on balanced truncationin which we study in  \cite{Takam2020Reduction}.	The next theorem confirms the expectations on the stability of $\mat{A}$.	
	\begin{theorem}[Stability of Matrix $\mat{A}$]	\label{stable_m} \ \\
		Under Assumption \ref{assum1} on the model and Assumption \ref{assum2} on the discretization,  
		the matrix $\mat{A}=\mat{A}(t)$ given in \eqref{matrix_A} is stable for all $t\in[0,T]$, i.e.,  all eigenvalues $\lambda (\mat{A})$ of $\mat{A}$ lie in   left open complex half plane. 
	\end{theorem}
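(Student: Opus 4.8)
The plan is to prove the statement through Gershgorin's circle theorem together with the structure theory of irreducibly diagonally dominant matrices. Fix $t\in[0,T]$ and put $\mat{M}:=-\mat{A}(t)$. I would show that (i) $\mat{M}$ has strictly positive diagonal and nonpositive off-diagonal entries; (ii) $\mat{M}$ is weakly row diagonally dominant; (iii) at least one row of $\mat{M}$ is strictly dominant; and (iv) $\mat{M}$ is irreducible. A classical consequence of (i)--(iv) is that every eigenvalue of $\mat{M}$ has strictly positive real part, hence every eigenvalue of $\mat{A}(t)$ has strictly negative real part; since $t$ is arbitrary and the strictness in (iii) is time-independent, the theorem follows.

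For (i) I would read off the sign structure: by \eqref{coeff_schem}, \eqref{coeff_schem_medium}, \eqref{mat_D}, the interface coefficients $\betam_I,\gamam_I,\betaf_I,\gamaf_I$ from \eqref{medium_I}--\eqref{fluid_I}, and the boundary substitutions \eqref{condition_1}--\eqref{condition_3}, each off-diagonal entry of $\mat{A}$ equals one of $\alpham,\betam,\alpha^{F\pm}(t),\betaf,\weightf\betam,\weightm\betaf$, all of which are nonnegative (indeed positive, using $v_0(t)\ge0$ and $\weightf,\weightm\in(0,1)$), while each diagonal entry $\gamma^{\bullet}$ is negative. For (ii)--(iii) I would go through the rows by type. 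For an interior medium or fluid row, and for a row adjacent to a homogeneous-Neumann part of the boundary (top, left, right, outlet) where the boundary value is folded into the diagonal, the absolute value of the diagonal entry equals the sum of the absolute values of the off-diagonal entries in that row; for an interface-adjacent row the elimination of the interface unknown via \eqref{interface_interpol} moves a weight $\weightm\beta$ (resp.\ $\weightf\beta$) onto the diagonal and a weight $\weightf\beta$ (resp.\ $\weightm\beta$) onto the trans-interface neighbour, so the row sum is again exactly balanced. The one place equality fails is in the rows $(i,1)$ next to the Robin bottom boundary: there \eqref{condition_2} adds $\tfrac{\kappam}{\kappam+\heattransfer h_y}\betam$ to the diagonal with a factor strictly below $1$ (because $\heattransfer>0$), so $|\gamam_B|$ strictly exceeds the off-diagonal row sum. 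Consequently, denoting by $R_k$ the $k$-th Gershgorin radius, every disc of $\mat{M}$ lies in the closed right half-plane and the Robin discs lie strictly inside it.

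For (iv) I would observe that, after removal of the boundary and interface unknowns, the graph of $\mat{M}$ has an edge exactly between retained grid points that are horizontal or vertical neighbours, \emph{plus} a ``bridge'' $(i,j-1)\!\leftrightarrow\!(i,j+1)$ across each eliminated interface node, produced by the coefficients $\betam_I,\betaf_I$; this graph is connected, so $\mat{M}$ is irreducible. The conclusion then runs as follows. By Gershgorin every eigenvalue $\mu$ of $\mat{M}$ has $\Re\mu\ge0$. If $\mu=is$ with $s\ne0$, then $|is-M_{kk}|=\sqrt{s^2+M_{kk}^2}>M_{kk}\ge R_k$ for every $k$, so $\mu$ lies outside every disc and cannot be an eigenvalue. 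If $\mu=0$, then $|0-M_{kk}|=M_{kk}\ge R_k$ for all $k$, so $\mu$ lies in the interior of no disc, which for an irreducible matrix forces equality $M_{kk}=R_k$ in \emph{every} row --- contradicting the strict inequality in the Robin rows. Hence $\Re\mu>0$ for all eigenvalues of $\mat{M}=-\mat{A}(t)$, i.e.\ $\mat{A}(t)$ is stable.

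I expect the main obstacle to be the bookkeeping in steps (ii)--(iii) rather than anything conceptual: one must treat interior rows, rows at each boundary piece, corner rows adjacent to two boundary pieces simultaneously, the lower- and upper-interface variants of \eqref{medium_I}--\eqref{fluid_I}, and both regimes (pump on and pump off), confirming in every case that diagonal dominance is at least weak and that the Robin rows remain a reliable source of strictness in all regimes --- in particular also when the pump is off and the convection terms vanish. A secondary subtlety is that irreducibility has to be argued \emph{after} the interface unknowns are eliminated, since dropping those rows and columns naively would disconnect each fluid segment from the surrounding soil; it is exactly the bridge coefficients $\betam_I,\betaf_I$ that restore connectivity. (A more analytic alternative, bounding $\mat{A}+\mat{A}\tr$ directly, seems less clean because the upwind convection terms destroy symmetry and make the symmetric part only negative \emph{semi}definite away from the Robin boundary.)
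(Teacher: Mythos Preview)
Your proposal is correct and follows essentially the same route as the paper: Gershgorin localisation together with weak diagonal dominance, irreducibility (``strongly connected''), and strict dominance in the Robin rows to exclude the eigenvalue $0$. The paper packages the last step as nonsingularity via Better's Corollary (Horn--Johnson), whereas you phrase it Taussky-style (an eigenvalue on the boundary of the Gershgorin region of an irreducible matrix must lie on the boundary of \emph{every} disc); these are equivalent, and your graph-based argument for irreducibility---including the interface ``bridges'' $\betam_I,\betaf_I$---matches what is needed.
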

	
	\begin{proof}
		Lemma \ref{lem_eigvalueA} in Appendix \ref{append_properties_A}  shows  by using  Gershgorin's circle theorem, that the eigenvalues are either located in   left open complex half plane or zero. Further, Lemma \ref{lem_nonsingular} (also in Appendix  \ref{append_properties_A}) shows that $\mat{A}(t)$ is non-singular for all $t\in[0,T]$ and thus excludes the case $\lambda(\mat{A})=0$.	
		Thus, for all eigenvalues it holds  $\lambda (\mat{A})\in \mathbb{C}_-$ and $\mat{A}$ is stable.  
		\qed
	\end{proof}

	\section{Full Discretization}
	\label{sec:full_discrtization}
	After discretizing the heat equation \eqref{heat_eq2} w.r.t.~spatial variables  we will now also discretize the temporal derivative and derive  an explicit and a family of implicit finite difference schemes for which we perform a stability analysis. 
	\subsection{ $\theta$-Implicit Finite Difference Scheme}\label{implicit:scheme}

	We introduce the notation  $N_{\tau}$ for  the number of grid points in $t$-direction,   $\tau={T}/{N_{\tau}}$ the time step and $t_k=k \tau, ~k \in \mathcal{N}_{\tau}=\{0,...,N_{\tau}\}$. Let $\mat{A}^k,\mat{B}^k, g^k, v_0^k$ be the values of $\mat{A}(t),\mat{B}(t)$, $g(t), v_0(t)$ at time  $t=t_k$, respectively. Further, we denote by 
	$ Y^k=(Y^k_1,\ldots,Y^k_n)^\top $ the discrete-time approximation of the vector function $Y(t)$ at time  $t=t_k$. Recall that $Y$ contains the temperatures $Q=Q(t,x,y)$ at the points of the grid excluding points on the boundary and interface.
	Discretizing the temporal derivative in \eqref{Matrix_form1} with the forward difference gives 
	\begin{align}
		\frac{d Y(t_k)}{d t} = \frac{Y^{k+1}-Y^k}{\tau}+ \mathcal{O}(\tau).
		\label{time_d}
	\end{align} 
	Substituting \eqref{time_d} into \eqref{Matrix_form1} and replacing the r.h.s.~of \eqref{Matrix_form1} by a convex combination of the values at time $t_k$ and $t_{k+1}$ with the weight  $\theta \in [0,1]$ gives the following general $\theta$-implicit finite difference scheme 
	\begin{align}
		\frac{Y^{k+1}-Y^k}{\tau} = \theta[\mat{A}(t_{k+1})Y^{k+1} +\mat{B}(t_{k+1}) g^{k+1}] +(1-\theta)[\mat{A}(t_{k})Y^{k} +\mat{B}(t_{k}) g^{k}]
		\nonumber
	\end{align}
	from which we derive for $k=0,\ldots,N_\tau-1$ the recursion
	\begin{align}
		\label{Matrix_form2} 
		\mat{G}^{k+1}Y^{k+1}& =\mat{H}^{k} Y^{k}+\tau {F}^{k} \\
		\nonumber
		\text{where }~\mat{G}^{k}&=\mathds{I}_{n}-\tau \theta \mat{A}^{k}, ~~\mat{H}^{k}=\mathds{I}_{n}+\tau(1-\theta)\mat{A}^{k}, ~\text{and} ~{F}^{k}=\theta \mat{B}^{k+1}g^{k+1}+(1-\theta)\mat{B}^{k}g^{k},
	\end{align}
	with the initial value $Y^0=Y(0)$ and the notation $\mathds{I}_{n}$ for the  $n \times n$ identity matrix.
	
	The above general  $\theta$-implicit scheme leads  for $\theta= 1/2$ and  $1$  to special cases which are known in the literature as Crank-Nicolson scheme   and backward Euler or  fully implicit scheme, respectively. The limiting case $\theta=0$ is not an implicit but a  fully explicit scheme  also known as forward Euler  scheme.
	
	\subsection{Stability of the Finite Difference Scheme}
	\label{stability_num_scheme}
	
	In this subsection we investigate the stability of the finite difference scheme \eqref{Matrix_form2} in the maximum norm and present in Theorem \ref{stability_scheme} below a stability condition to the time discretization. The use of such stability results is twofold.  First, it ensures ``robustness'' w.r.t. round-off errors of the problems's input data, which are the  initial condition and the inlet and underground temperature, in the sense that  we  can run the recursion for an arbitrarily long time without a total loss of accuracy. 
	Second, stability of the scheme is a key ingredient in any analysis of convergence of the exact solution of the finite difference scheme  to the exact solution of the given initial boundary value problem for the PDE for an infinite refinement of space and time discretization.
	
	Note that a complete convergence analysis is beyond the scope of this paper. In particular, we do not investigate consistency  issues. Consistency roughly says that the finite differences scheme approximates correctly the PDE.	The proof of consistency is straightforward and based on Taylor series expansions. 
	We refer to the Lax-Richtmyer Equivalence Theorem, see Sanz-Serna and Palencia \cite{sanz1985general}, 
	Thomas \cite[Theorem 2.5.3]{thomas2013numerical}, stating that a consistent finite difference scheme for a well-posed linear initial boundary value problem,  is convergent if and only if it is stable. Hence, for a consistent scheme, convergence is synonymous with stability. 
	
	Our stability result is given in terms of  maximum norms which are defined for  a vector $X \in \R^n$  by 
	$\|X\|_{\infty}=\displaystyle\max_{1\leq i\leq n}|X_i|$  and for a square matrix $\matgen \in \C^{n \times n}$ by  $\|\matgen\|_{\infty}=\displaystyle\max_{1\leq i\leq n} \sum_{j=1}^{n}|\matgenel_{ij}|$.
	
	\begin{definition}[Stability of difference scheme in the maximum norm]
		\label{def_stability}\\
		The finite difference scheme (\ref{Matrix_form2}) is stable in the  maximum norm if there exist constants $C_0, ~C_g>0$ such that 
		\begin{align}
			\label{def:stability}
			\|Y^{k}\|_{\infty} \leq C_0\|Y^0\|_{\infty}+ C_g \max_{0\leq j\leq k}\|g^j\|_{\infty} ~~\text{for}~~k=1,2,\ldots, \mathcal{N}_{\tau}.
		\end{align}
	\end{definition}
	
	\begin{theorem}[Stability of $\theta$-implicit scheme]
		\label{stability_scheme}\\
		Under Assumption \ref{assum1} on the model and Assumption \ref{assum2} on the discretization it holds	
		\begin{enumerate}
			\item For $\theta \in [0, 1)$, the semi-implicit finite difference scheme (\ref{Matrix_form2}) is stable if the time step $\tau$ satisfies the condition 
			\begin{align}
				\tau \leq \frac{1}{(1-\theta) \eta},  \label{stability}
				\quad \text{where}\quad 
				\eta			=2 \max\{\af ,\am \}\Big(\frac{1}{h_x^2}+\frac{1}{h_y^2}\Big)+\frac{\vconst }{h_x}.
			\end{align}
			
			\item For $\theta =1$, the fully implicit finite difference scheme (\ref{Matrix_form2}) is unconditionally stable, i.e., stable for any $\tau>0.$			
		\end{enumerate}
		The constants $C_0,C_g$ in \eqref{def:stability} can be chosen as 
		\begin{align}
			\label{bound_CB}
			C_0=1 ~\text{ and }~ C_g= C_B T \quad \text{where}\quad C_B=\max \big\{\big\|\mat{B}^{P}\big\|_{\infty},~\big\|\mat{B}^{N}\big\|_{\infty}\big\}.
		\end{align}
	\end{theorem}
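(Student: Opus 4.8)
The plan is to turn the recursion \eqref{Matrix_form2} into the explicit form $Y^{k+1} = (\mat{G}^{k+1})^{-1}\mat{H}^{k}Y^{k} + \tau(\mat{G}^{k+1})^{-1}F^{k}$ and to establish the two maximum-norm estimates $\|(\mat{G}^{k+1})^{-1}\|_{\infty}\le 1$ and $\|(\mat{G}^{k+1})^{-1}\mat{H}^{k}\|_{\infty}\le 1$ (the latter under the stated time-step restriction when $\theta<1$, unconditionally when $\theta=1$). Granted these, submultiplicativity gives $\|Y^{k+1}\|_{\infty}\le\|Y^{k}\|_{\infty}+\tau\|F^{k}\|_{\infty}$, and since $\mat{B}(t)$ takes only the two values $\mat{B}^{P}$ (pump on) and $\mat{B}^{N}$ (pump off) from \eqref{eq:input_matrix} one has $\|F^{k}\|_{\infty}\le C_B\max_{0\le j\le k+1}\|g^{j}\|_{\infty}$ with $C_B$ as in \eqref{bound_CB}; iterating and using $\tau k\le\tau N_\tau=T$ then yields \eqref{def:stability} with $C_0=1$ and $C_g=C_B T$.

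Both norm bounds rest on the entrywise structure of $\mat{A}(t)$ that already underlies the Gershgorin argument behind Lemma \ref{lem_eigvalueA}: from the coefficient formulas \eqref{coeff_schem}, \eqref{coeff_schem_medium}, \eqref{medium_I}, \eqref{fluid_I} and the entry list in Appendix \ref{append_properties_A} one checks that every diagonal entry $a_{ii}(t)$ is strictly negative, every off-diagonal entry is non-negative, and every row sum $\sum_{j}a_{ij}(t)$ is $\le 0$ (equal to $0$ for interior and interface-adjacent rows, strictly negative for the Robin-bottom and the pumped-inlet rows). Hence $\mat{G}^{k}=\mathds{I}_n-\tau\theta\mat{A}^{k}$ is a $Z$-matrix with positive diagonal that is (strictly) diagonally dominant — the $+1$ on the diagonal providing the slack — so it is a nonsingular $M$-matrix and $(\mat{G}^{k})^{-1}\ge 0$ entrywise. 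Writing $\mathbf{1}$ for the all-ones vector, $\mat{G}^{k}\mathbf{1}=\mathbf{1}-\tau\theta\mat{A}^{k}\mathbf{1}\ge\mathbf{1}$ because $\mat{A}^{k}\mathbf{1}\le 0$; multiplying by $(\mat{G}^{k})^{-1}\ge 0$ gives $(\mat{G}^{k})^{-1}\mathbf{1}\le\mathbf{1}$, hence $\|(\mat{G}^{k})^{-1}\|_{\infty}=\|(\mat{G}^{k})^{-1}\mathbf{1}\|_{\infty}\le 1$.

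For $\mat{H}^{k}=\mathds{I}_n+\tau(1-\theta)\mat{A}^{k}$ I distinguish the two cases. When $\theta=1$, $\mat{H}^{k}=\mathds{I}_n$, so $\|\mat{H}^{k}\|_{\infty}=1$ with no restriction on $\tau$, which gives part (2). When $\theta\in[0,1)$, the off-diagonal entries of $\mat{H}^{k}$ are already non-negative, while the diagonal entries $1+\tau(1-\theta)a_{ii}(t)$ are non-negative exactly when $\tau(1-\theta)|a_{ii}(t)|\le 1$; a short case check over the diagonal types — $\gamaf(t)$, $\gamam$ and all the boundary- and interface-modified variants, using $v_0(t)\le\vconst$ — shows $\max_{i}|a_{ii}(t)|\le\eta$, so the condition $\tau\le\frac{1}{(1-\theta)\eta}$ makes $\mat{H}^{k}\ge 0$. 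Then $\|\mat{H}^{k}\|_{\infty}=\|\mat{H}^{k}\mathbf{1}\|_{\infty}=\|\mathbf{1}+\tau(1-\theta)\mat{A}^{k}\mathbf{1}\|_{\infty}\le 1$, again using $\mat{A}^{k}\mathbf{1}\le 0$ and $\mat{H}^{k}\mathbf{1}\ge 0$. Combining the two bounds gives $\|(\mat{G}^{k+1})^{-1}\mat{H}^{k}\|_{\infty}\le 1$, which feeds the iteration in the first paragraph.

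The main obstacle is the matrix-analytic core: verifying that $\mat{A}(t)$ really has the $Z$-sign pattern with non-positive row sums and with diagonal magnitudes bounded by $\eta$, uniformly over every structurally distinct row — interior fluid and medium points, the $2n_P$ interface couplings \eqref{medium_I}, \eqref{fluid_I}, and the top, Robin-bottom, inlet/left and outlet/right boundary rows (including corners) — and uniformly in $t\in[0,T]$ despite the pump-on/pump-off switching of $v_0(t)$, and then invoking the $M$-matrix property to obtain $(\mat{G}^{k})^{-1}\ge 0$. Everything after that is routine norm bookkeeping.
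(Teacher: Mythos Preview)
Your proof is correct and reaches the same conclusion with the same constants, but the route differs from the paper's in a genuine way.

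The paper proves the two key norm bounds via Lemma \ref{lem:bounds_GHF}: for $\mat{G}^{k}$ it invokes Varah's inequality (Lemma \ref{Inverse}) directly from strict diagonal dominance, obtaining $\|(\mat{G}^{k})^{-1}\|_{\infty}\le 1/J(\mat{G}^{k})\le 1$ without ever needing $(\mat{G}^{k})^{-1}\ge 0$; for $\mat{H}^{k}$ it computes the row sums $S_i(\mat{H}^{k})$ explicitly, distinguishing the cases $\tau\lessgtr\tau_i^{P/N}$ and $J_i(\mat{A})=0$ versus $J_i(\mat{A})>0$, and arrives at the sharp threshold $\tau\le 2/((1-\theta)\|\mat{A}^{P/N}\|_{\infty})$, which Lemma \ref{lem_max_norm_A} turns into \eqref{stability}. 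You instead exploit the $Z$-sign pattern and non-positive row sums of $\mat{A}(t)$ to argue via $M$-matrices and the all-ones vector: $(\mat{G}^{k})^{-1}\ge 0$ together with $\mat{G}^{k}\mathbf{1}\ge\mathbf{1}$ gives $(\mat{G}^{k})^{-1}\mathbf{1}\le\mathbf{1}$, and the time-step restriction is used only to make $\mat{H}^{k}\ge 0$, after which $\mat{H}^{k}\mathbf{1}\le\mathbf{1}$ follows from $\mat{A}^{k}\mathbf{1}\le 0$. Your argument is more unified (both bounds flow from the same monotonicity trick) and yields a discrete maximum principle as a by-product; the paper's argument is more elementary in that it avoids $M$-matrix theory, and its row-by-row analysis of $S_i(\mat{H}^{k})$ shows the stability threshold is actually sharp, whereas your route only shows sufficiency (your condition $\tau(1-\theta)\max_i|a_{ii}|\le 1$ happens to coincide with the paper's $\tau(1-\theta)\|\mat{A}\|_{\infty}/2\le 1$ here because the maximal $|a_{ii}|$ is attained on rows with $J_i(\mat{A})=0$). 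The bound on $\|F^{k}\|_{\infty}$ and the final iteration are essentially identical to the paper.
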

	The proof is based on the following lemma which is proven in Appendix \ref{append_stability_scheme}.
	\begin{lemma} \label{lem:bounds_GHF}	Under Assumption \ref{assum1} on the model and Assumption \ref{assum2} on the discretization it holds for all $k=0,\ldots,N_\tau-1$ and $\theta\in[0,1]$ that
		\begin{enumerate}
			\item  the matrices $\mat{G}^{k+1}$ given in \eqref{Matrix_form2} are invertible and $	\|(\mat{G}^{k+1})^{-1}\|_\infty\le 1$ with equality for $\theta=0$; 
			\item
			the matrices $\mat{H}^k$ given in \eqref{Matrix_form2} satisfy $\|\mat{H}^k\|_\infty\le 1$ ~for all $\tau>0$ if $\theta=1$;\\ and for 
			$  \tau \leq \frac{1}{(1-\theta) \eta}$ if $\theta\in[0,1)$, where $\eta$ is given in \eqref{stability};
			\item the vectors $F^k$  given in \eqref{Matrix_form2} satisfy 	$\|{F}^k\|_\infty\le C_B\displaystyle\max_{0\leq j\leq k+1}\big\|g^{j}\big\|_{\infty}$			
			where $C_B$ given in \eqref{bound_CB}. 
		\end{enumerate}	
	\end{lemma}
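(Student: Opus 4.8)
The plan is to reduce all three bounds to a handful of structural properties of the system matrix $\mat{A}=\mat{A}(t)$ that are visible in its explicit entries and that already underlie the Gershgorin argument in Lemma~\ref{lem_eigvalueA}. Writing $A_{ij}$ for the entries of $\mat{A}(t)$, I would first record: (P1) every diagonal entry is strictly negative, $A_{ii}<0$; (P2) every off-diagonal entry is nonnegative, $A_{ij}\ge 0$; and (P3) every row has nonpositive sum, equivalently weak row-diagonal dominance $\sum_{j\ne i}|A_{ij}|\le|A_{ii}|$. These follow by a direct case check over the row types produced by the scheme: for an interior fluid row the entries in \eqref{coeff_schem} satisfy $\gamaf+\alpha^{F+}+\alpha^{F-}+2\betaf=0$, and analogously $\gamam+2\alpham+2\betam=0$ for an interior medium row by \eqref{coeff_schem_medium}; the interface-modified rows \eqref{medium_I}--\eqref{fluid_I} again give row sum $0$ because the weights satisfy $\weightf+\weightm=1$; the top/left/right/outlet Neumann reductions likewise preserve the zero row sum; while the Robin bottom row and the inlet Dirichlet row (pump on) produce a strictly negative row sum, reflecting the dissipation there. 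I would also record the uniform bound (P4) $|A_{ii}|\le\eta$ with $\eta$ as in \eqref{stability}: the largest diagonal magnitude occurs on an interior fluid row, where $|\gamaf|=2\af(1/h_x^2+1/h_y^2)+v_0/h_x\le\eta$ since $v_0\le\vconst$ and $\af\le\max\{\af,\am\}$, and every boundary/interface modification only reduces $|A_{ii}|$.

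For part (1), I observe that $\mat{G}^{k+1}=\mathds{I}_n-\tau\theta\mat{A}^{k+1}$ is a $Z$-matrix by (P1)--(P2): its diagonal equals $1+\tau\theta|A_{ii}|\ge 1$ and its off-diagonals equal $-\tau\theta A_{ij}\le 0$. By (P3), $\mat{G}^{k+1}\mathbf{1}=\mathbf{1}-\tau\theta(\mat{A}^{k+1}\mathbf{1})\ge\mathbf{1}>0$ componentwise, so by the standard $M$-matrix criterion (a $Z$-matrix admitting a positive vector with strictly positive image is a nonsingular $M$-matrix; see the matrix-analysis results in the appendix) $\mat{G}^{k+1}$ is invertible with $(\mat{G}^{k+1})^{-1}\ge 0$. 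For a nonnegative matrix the maximum norm equals the largest component of its product with $\mathbf{1}$, so $\|(\mat{G}^{k+1})^{-1}\|_\infty=\|(\mat{G}^{k+1})^{-1}\mathbf{1}\|_\infty$; and from $\mat{G}^{k+1}\mathbf{1}\ge\mathbf{1}=\mat{G}^{k+1}(\mat{G}^{k+1})^{-1}\mathbf{1}$ together with $(\mat{G}^{k+1})^{-1}\ge 0$ I obtain $(\mat{G}^{k+1})^{-1}\mathbf{1}\le\mathbf{1}$, whence $\|(\mat{G}^{k+1})^{-1}\|_\infty\le 1$. For $\theta=0$ we have $\mat{G}^{k+1}=\mathds{I}_n$, giving equality.

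For part (2), the case $\theta=1$ is immediate since $\mat{H}^k=\mathds{I}_n$. For $\theta\in[0,1)$ I compute the absolute row sums of $\mat{H}^k=\mathds{I}_n+\tau(1-\theta)\mat{A}^k$ directly. Under $\tau\le 1/((1-\theta)\eta)$ and (P4) each diagonal satisfies $\tau(1-\theta)|A_{ii}|\le\tau(1-\theta)\eta\le 1$, so $1+\tau(1-\theta)A_{ii}=1-\tau(1-\theta)|A_{ii}|\ge 0$ stays nonnegative. Using (P2) for the off-diagonals, the $i$-th absolute row sum equals $1-\tau(1-\theta)\big(|A_{ii}|-\sum_{j\ne i}A_{ij}\big)$, and by (P3) the bracket is nonnegative, so each row sum is $\le 1$; taking the maximum gives $\|\mat{H}^k\|_\infty\le 1$. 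Part (3) is then routine: by the triangle inequality and submultiplicativity of the induced maximum norm, $\|F^k\|_\infty\le\theta\|\mat{B}^{k+1}\|_\infty\|g^{k+1}\|_\infty+(1-\theta)\|\mat{B}^k\|_\infty\|g^k\|_\infty$, and since each $\mat{B}^k$ equals either $\mat{B}^P$ or $\mat{B}^N$ by \eqref{eq:input_matrix}, every factor $\|\mat{B}^k\|_\infty$ is bounded by $C_B$ from \eqref{bound_CB}; the remaining convex combination of $\|g^{k+1}\|_\infty$ and $\|g^k\|_\infty$ is at most $\max_{0\le j\le k+1}\|g^j\|_\infty$.

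The main obstacle is not conceptual but the bookkeeping in establishing (P1)--(P4) uniformly: one must verify the sign pattern and the row-sum identity for every row generated by the discretization---interior fluid and medium, the Neumann top/left/right/outlet rows, the Robin bottom row, the inlet row in both pump regimes, the two interface-adjacent rows, and the corner rows where two such modifications coincide---checking in each case that the Neumann and interface reductions preserve the zero row sum while the Robin and inlet-Dirichlet reductions make it strictly negative. Once these properties are in hand (they are exactly what the Gershgorin estimate in Lemma~\ref{lem_eigvalueA} already exploits), the $M$-matrix argument for part (1) and the row-sum estimate for part (2) are short, and part (3) is immediate.
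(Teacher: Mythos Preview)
Your proposal is correct. The structural properties (P1)--(P4) you isolate are precisely what the paper records in Table~\ref{table_Rowsum} and Lemmas~\ref{lem_DD} and \ref{lem_max_norm_A}, and parts~(2) and~(3) of your argument are essentially the paper's own (for part~(2) the paper phrases the step-size constraint via $\|\mat{A}\|_\infty\le 2\eta$ rather than your diagonal bound $|A_{ii}|\le\eta$, but the content and the resulting condition $\tau\le 1/((1-\theta)\eta)$ are the same).

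The genuine difference is in part~(1). The paper observes that $J_i(\mat{G}^k)=1+\tau\theta\,J_i(\mat{A}^k)\ge 1$ by the weak diagonal dominance of $\mat{A}^k$, so $\mat{G}^k$ is strictly diagonally dominant, and then invokes Varah's bound (Lemma~\ref{Inverse}) to obtain $\|(\mat{G}^k)^{-1}\|_\infty\le 1/J(\mat{G}^k)\le 1$ in one line. Your route instead goes through $M$-matrix/monotonicity: $\mat{G}$ is a $Z$-matrix with $\mat{G}\mathbf{1}\ge\mathbf{1}$, hence inverse-positive, hence $(\mat{G})^{-1}\mathbf{1}\le\mathbf{1}$. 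Both are short and standard; the paper's version has the minor practical advantage that it uses only the lemma already provided in the appendix, whereas the $M$-matrix criterion you invoke is not among the results collected there and would need to be added or cited externally. On the other hand, your argument yields the extra structural information $(\mat{G}^k)^{-1}\ge 0$, which the paper does not extract.
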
	
	\begin{proof} of Theorem \ref{stability_scheme}.
		From  the invertibility of $G^k$ (see Lemma \ref{lem:bounds_GHF},1.) and the iteration of the recursion \eqref{Matrix_form2} we obtain   for $~k=1,\ldots, \mathcal{N}_{\tau}$ the explicit representation
		\begin{align*}
			Y^{k}&=(\mat{G}^{k})^{-1}\mat{H}^{k-1}Y^{k-1}+\tau(\mat{G}^{k})^{-1}{F}^{k-1}\\
			&=(\mat{G}^{k})^{-1}\mat{H}^{k-1}(\mat{G}^{k-1})^{-1}\mat{H}^{k-2}Y^{k-2}+\tau(\mat{G}^{k})^{-1}\mat{H}^{k-1}(\mat{G}^{k-1})^{-1}{F}^{k-2}+ \tau(\mat{G}^{k})^{-1}{F}^{k-1} \nonumber\\
			&=\ldots=\Big(\prod_{j=1}^{k}(\mat{G}^{k-j+1})^{-1}\mat{H}^{k-j} \Big)Y^0+\tau\sum_{j=0}^{k-1} \Big(\prod_{i=1}^{j}(\mat{G}^{k-i+1})^{-1}\mat{H}^{k-i} \Big) (\mat{G}^{k-j})^{-1}{F}^{k-j-1},
		\end{align*}
		where we define $\displaystyle\prod_{j=1}^{0}(\cdot)=\mathds{I}_n$. 
		Taking the maximum norm on both sides 
		and applying the triangular and Cauchy-Schwarz inequality  gives
		\begin{align*}
			\big\|Y^{k}\big\|_{\infty} &\leq \Big(\prod_{j=1}^{k}\big\|(\mat{G}^{k-j+1})^{-1}\big\|_{\infty}\big\|\mat{H}^{k-j}\big\|_{\infty} \Big)\big\|Y^0\big\|_{\infty} \\& 
			~~~~+ \tau\sum_{j=0}^{k-1} \Big(\prod_{i=1}^{j}\big \|(\mat{G}^{k-i+1})^{-1}\big\|_{\infty}\big\|\mat{H}^{k-i} \big\|_{\infty} \Big) \big\|(\mat{G}^{k-j})^{-1}\big\|_{\infty}\big\|{F}^{k-j-1}\big\|_{\infty}.
		\end{align*}
		Substituting the estimates for $\|(\mat{G}^{k})^{-1}\|_{\infty}, \|\mat{H}^{k}\|_{\infty}$ and  $\|{F}^{k}\|_{\infty}$ given in Lemma \ref{lem:bounds_GHF} into the above inequality  yields
		\begin{align*}
			\big\|Y^{k}\big\|_{\infty} 	&\leq \big\|Y^0\big\|_{\infty} +\tau k\, C_B\displaystyle\max_{0\leq j\leq k}\big\|g^{j}\big\|_{\infty}
			\le \big\|Y^0\big\|_{\infty} +C_B T\max_{0\leq j\leq k}\big\|g^{j}\big\|_{\infty},
		\end{align*}
		where we used   $\tau k\le \tau N_\tau=T.$ According to the second assertion of Lemma \ref{lem:bounds_GHF}  the above estimate holds for all $\tau>0$ if  $\theta=1$ and   for  $\tau \leq \frac{1}{(1-\theta) \eta}$ if $\theta\in[0,1)$. 
		\qed
	\end{proof}
	
	\section{Conclusion}
	
	We have investigated numerical methods for the  simulation of the short-term behavior of the spatial temperature distribution in a geothermal energy storage. The underlying initial boundary value problem for the heat equation  with a convection term  has been discretized using finite difference schemes. 
	In a first step we studied the semi-discretization with respect to spatial variables. 
	For the resulting system of linear ODEs we proved that the system matrix is stable. In a second step the full space-time discretization has been considered.  Here we derived explicit and implicit finite-difference schemes and investigated associated stability problems.		
	
	Based on the findings of this paper we present in  \cite{Takam2021NumResults}  results of a large number of numerical experiments where we have shown how these simulations can support the design and operation of a geothermal storage. Examples are the dependence of the charging and discharging efficiency on the topology and arrangement  of \phxs and on the length of charging and discharging periods.   
	
	In \cite{Takam2020Reduction} we study model reduction techniques  to derive low-di\-men\-sional approximations of aggregated characteristics of the temperature distribution  describing the input-output behavior of the storage. The latter is crucial if the geothermal storage is embedded into a  residential heating system  and the cost-optimal management of such systems is studied mathematically in terms of optimal control problems.

	\newpage \clearpage
	\begin{appendix}	
		\section{List of Notations}
		\label{append_a}
		\begin{longtable}{p{0.3\textwidth}p{0.68\textwidth}l}
			$Q=Q(t,x,y)$ & temperature in the geothermal storage &\\
			$T$ & finite time horizon&\\
			$l_x$,~$l_y$,~$l_z$ &width, height and depth of the storage &\\
			$\Domainspace =(0, l_x) \times (0,l_y)$ &domain of the geothermal storage &\\
			$\Dm , ~\Df $ &  domain of  medium (soil) and  \phx fluid &\\
			$\DInterface=\DInterfaceL \cup \DInterfaceU$ & interface between the \phxs and the medium &\\
			$\partial \Domainspace$ &boundary of the domain&\\
			$\partial \Din $,~$\partial \Dout$ & inlet and outlet boundaries of the \phx&\\
			$\partial \Dleft , \partial \Dright , \partial \Dtop $,~$\partial \Dbottom $  & left, right, top and bottom boundaries of the domain&\\		
			$\Nmed,~\Nfluid $&  subset of index pairs of points  in the medium and \phx fluid&\\
			$\Ninter, \NinterL,\NinterU, \mathcal{N}^{\mathcal{B}}_*$  &  subsets of index pairs for points on  interface and  boundary &\\
			$\mathcal{K}$  &  mapping $(i,j)\mapsto l=\mathcal{K}(i,j)$ of index pairs to single indices&\\
			$v=v_0(t)(v^x, v^y)^{\top}$ & time-dependent velocity vector,&\\
			$\vconst$ &  constant velocity during pumping&\\
			$\cpf$,~$\cpm$ & specific heat capacity of the fluid  and medium &\\
			$\rhof$,~$\rhom$ & mass density of the fluid and medium&\\
			$\kappaf$,~$\kappam$ &thermal conductivity of the fluid and medium&\\
			$\af $,~$\am $& thermal diffusivity of the fluid and medium&\\
			$\heattransfer$ & heat transfer coefficient between storage and  underground &\\
			$Q_0$ &initial temperature distribution of the geothermal storage &\\
			$\Qg(t)$ & underground temperature &\\
			$\Qin(t), \QinC(t), \QinD(t)$ & inlet temperature of the \phxk, during charging and discharging, &\\
			$N_x,~N_y$,~$N_{\tau}$ &  number of grid points in $x,y$ and $\tau$-direction&\\
			$h_x, h_y$,~$\tau$ & step size in $x$ and $y$-direction and the time step&\\
			$\normalvec $& outward normal to the boundary $\partial\Domainspace$&\\
			$n$ &   dimension of vector $Y$ &\\	
			$n_P$ & number of \phxs&\\	
			$\mathds{I}_n$& $n \times n$ identity matrix&\\
			$\mat{A}$ & $n \times n$ dimensional  system matrix &\\
			$\mat{B}$ & $n \times m$ dimensional input matrix    &\\
			$\mat{D}^{\pm}, ~\mat{A}_{L}, ~\mat{A}_{M},~\mat{A}_{R}$ & block matrices of matrix $\mat{A}$&\\
			$q$ &   dimension of block matrices  &\\
			$~\alpha^*_*,~\beta^*_*,~\gamma^*_*$&coefficients of the matrix $\mat{A}$ &\\
			$\weightf ,\weightm $ & weighting factors for discretitzation of interface condition& \\	
			$Y$&vector of temperatures at grid points&\\
			$g$& input variable of the system&\\
			$\nabla$, ~~$\Delta=\nabla \cdot \nabla$ & 	 gradient, Laplace operator&\\
			\phx &  pipe heat exchanger
		\end{longtable}

		\section{Block  Matrices $\mat{A}_L$ and $\mat{A}_R$}
		\label{append_block_matrices}       
		
		\begin{table}[h!]
			
			\[
			~\mat{A}_{L/R}=\left(\begin{array}{ccccccccccccccc}
				\gamam _{BB} & \betam  &&&&&&&&&&\multicolumn{3}{c}{\text{ \scriptsize Bottom Boundary }} & \\
				\betam  & \gamam _B& \betam &&&&&&&&&& \\
				&\ddots&\ddots&\ddots&&&&& &\text{\Large0}&&\multicolumn{3}{l}{\text{\scriptsize Medium}}\\
				&&\betam  & \gamam _B& \betam &&&&& \\
				&&&\betam  &\gamam _{IB}	 &   \betam _{I} && &&&&\multicolumn{3}{l}{\text{\scriptsize Lower interface}} \\ \hline\\[-1.5ex]
				&&&& \betaf_I& \gamaf _{IL/R}	 & \betaf & &&& \\
				&&&&&\betaf & \gamaf _{L/R}&~ \betaf&&\\
				&&&&&&\ddots&\ddots&\ddots& &&\multicolumn{3}{l}{\text{\scriptsize Fluid}}\\
				&&&&&&&\betaf & \gamaf _{L/R}& \betaf&& \\
				\multicolumn{3}{l}{\text{\scriptsize Upper interface}}&&&&&&\betaf & \gamaf _{IL/R}	 &   \betaf_I&& \\ \hline \\[-1.5ex]
				&&&&&&&&& \betam_I&\!  \gamam _{IB}	 & \betam &\\
				&&&&&&&&&&\betam  & \gamam _B& \betam & \\
				\multicolumn{3}{l}{\text{\scriptsize Medium}}&&&\text{\Large 0}&&&&&&\ddots&\ddots&\ddots&\\
				&&&&&&&&&&&& \betam  & \gamam _B& \betam  \\
				\multicolumn{3}{c}{\text{\scriptsize Top Boundary }}&&&&&& &&&& &\betam  &  \gamam _{TB}
			\end{array} \right).\] 
			
			\mycaption{Sketch of the matrices $\mat{A}_{L}$ and  $\mat{A}_{R}$ for the case of one \phxk.}
			\label{table:matrix_last}
		\end{table}
		
		This appendix gives  the first and the last diagonal block matrices $\mat{A}_L$ and  $\mat{A}_R\in \R^{\ncol \times \ncol}$ of the matrix $\mat{A}$ given in \eqref{matrix_A}. Its entries result from the discretization of boundary conditions at the  left and right boundary.
		Both block matrices are tridiagonal and sketched for the case of only one \phx in Table  \ref{table:matrix_last}.
		The entries in the  first and last row  are related to the inner grid points next to the four corners of the domain and  obtained by substituting homogeneous Neumann condition \eqref{condition_1}  and  Robin condition   \eqref{condition_2} into \eqref{scheme}.
		For the grid points next to the lower left  we obtain 
		\begin{align*}
			\frac{d}{dt}Q_{11}(t) &= \alpham  Q_{21}(t)+\alpham  Q_{01}(t)+\betam  Q_{12}(t) +\betam  Q_{10}(t)+\gamam  Q_{11}(t) \\
			&=  \alpham  Q_{21}(t)+\betam  Q_{12}(t) + \bigg(\alpham +\frac{\kappam}{\kappam+\heattransfer h_y}\betam +\gamam \bigg) Q_{11}(t)+ \frac{\heattransfer h_y}{\kappam+\heattransfer h_y}\betam \Qg(t)\\
			&=\alpham  Q_{21}(t)+\betam  Q_{12}(t) + \gamam_{BB} Q_{11}(t)+ \frac{ \heattransfer h_y}{\kappam+\heattransfer h_y}\betam \Qg(t),
		\end{align*}
		where $\gamam_{BB}=\alpham +\frac{\kappam }{\kappam+\heattransfer h_y}\betam +\gamam $. Recall, that $\alpham , \betam ,\gamam $ are given by \eqref{coeff_schem_medium}. 
		
		Analogously, we derive for the lower right corner
		\begin{align*}
			\frac{d}{dt}Q_{N_x-1,1}(t) 
			&=  \alpham  Q_{N_x-2,1}(t)+\betam  Q_{N_x,2}(t) + \gamam_{BB} Q_{N_x,1}(t)+ \frac{\heattransfer h_y}{\kappam+\heattransfer h_y}\betam \Qg(t).
		\end{align*}
		Note that the last terms on the r.h.s.~of the above equations are contributions to the input term $\mat{B}(t) g(t)$  given in \eqref{eq:input_matrix} and \eqref{eq:input}. For the grid points next to the upper left and right corner we have to apply the homogeneous Neumann conditions \eqref{condition_1} and obtain from \eqref{scheme}
		\begin{align*}	
			\frac{d}{dt}Q_{1,N_y-1}(t) &= \alpham  Q_{2,N_y-1}(t)+\alpham  Q_{0,N_y-1}(t)+\betam  Q_{1,N_y}(t) + \betam  Q_{1,N_y-2}(t)+ \gamam  Q_{1,N_y-1}(t)\\
			&= \alpham  Q_{2,N_y-1}(t)+\betam  Q_{1,N_y-2}(t) +(\alpham +\betam +\gamam ) Q_{1,N_y-1}(t)\\
			&= \alpham  Q_{2,N_y-1}(t)+\betam  Q_{1,N_y-2}(t) + \gamam_{TB} Q_{1,N_y-1}(t),
		\end{align*}	
		where	$\gamam_{TB}=\alpham +\betam +\gamam $. Analogously, we derive for the upper right corner
		\begin{align*}	
			\frac{d}{dt}Q_{N_x-1,N_y-1}(t) 
			&= \alpham  Q_{N_x-2,N_y-1}(t)+\betam  Q_{N_x-1,N_y-2}(t) + \gamam_{TB} Q_{N_x-1,N_y-1}(t).
		\end{align*}	
		For ``inner'' grid points located next to insulated left and right boundary but not next to the upper and lower boundary or the interface we have to combine \eqref{scheme} with  the homogeneous Neumann condition \eqref{condition_1}. This leads to the  coefficient $\gamam_B =\gamam +\alpham $ on the main diagonal.
		
		For the grid points next to the inlet boundary  we apply  Dirichlet condition during pumping and  homogeneous Neumann condition if the pump is off, see  \eqref{condition_3}.    For $j$ with $(0,j)\in\NinletB$ it holds  
		\begin{align*}	
			\frac{d}{dt}Q_{1j}(t) &= \alpha^{F+} Q_{2j}(t)+\alpha^{F-} Q_{ 0j}(t)+\betaf Q_{1,j+1}(t) +\betaf Q_{1,j-1}(t)+\gamaf Q_{1j}(t) \\
			&=\alpha^{F+} Q_{2j}(t)+\betaf Q_{1j+1}(t) +\betaf Q_{1,j-1}(t) +\begin{cases}
				\gamaf Q_{1j}(t)+\alpha^{F-} \Qin(t) &\text{pump on}\\[0.3ex]
				(\gamaf +\alpha^{F-})Q_{1j}(t)  & \text{pump off}
			\end{cases}\\[0.5ex]
			&=\alpha^{F+} Q_{2j}(t)+\betaf Q_{1,j+1}(t) +\betaf Q_{1,j-1}(t)+\gamaf _LQ_{1j}(t)+b_{k1} \Qin(t),
		\end{align*}		
		where~~$\gamaf_{L}=\gamaf_{L}(t)=
		\begin{cases}
			\gamaf  & \text{pump on},\\
			\gamaf + \frac{\af }{h^2_x}& \text{pump off},
		\end{cases} $ ~~and~~ 
		$ B_{l1}=B_{l1}(t)=\begin{cases}\begin{array}{cl}
				\frac{\af }{h^2_x}+\frac{\vconst}{h_x}  & \text{pump on,}\\
				0 & \text{pump off,}
		\end{array}\end{cases}$ \\
		with $l=\mathcal{K}(1,j)$.	We note that 
		$\alpha^{F \pm}, \betaf, \gamaf $ are given in   \eqref{coeff_schem} and 
		point out that the term $b_{k1} \Qin(t)$ contributes to the input term  $\mat{B}(t) g(t)$.

		At the outlet boundary we have homogeneous Neumann condition and for the grid points next to the outlet  we obtain from the discretized boundary condition
		\eqref{condition_1}    for $j$ with $(N_x,j)\in\NoutB$ it holds
		\begin{align*}	
			\frac{d}{dt}Q_{N_x-1,j}(t) &= \alpha^{F+} Q_{N_x,j}(t)+\alpha^{F-} Q_{N_x-2,j}(t)+\betaf Q_{N_x-1,j+1}(t) +\betaf Q_{N_x-1,j-1}(t)+\gamaf Q_{N_x-1,j}(t) \\
			&=\alpha^{F-} Q_{N_x-2,j}(t)+\betaf Q_{N_x-1,j+1}(t) +\betaf Q_{N_x-1,j-1}(t)+\gamaf _RQ_{N_x-1,j}(t),
		\end{align*}
		where $	\gamma_{R}^f=\gamaf +\alpha^{F+}$ and  $\alpha^{F \pm}, \betaf, \gamaf $ are given in   \eqref{coeff_schem}.
		
		Finally, for the grid points next to the interface we obtain by an analogous procedure as described in Subsec.~\ref{interface_discrete} the coefficients
		\begin{equation*}
			\gamam _{IB}=\gamam _B+\weightm  \betam , \quad 
			\gamaf _{IL}=\gamaf _{IL}(t)=\gamaf _{L}(t)+\weightf  \betaf,\quad 
			\gamaf _{IR}=\gamaf _{R}+\weightf  \betaf,  
		\end{equation*}	
		where $\weightm $ and $\weightf $ are given  \eqref{interface_interpol}. Recall that the off-diagonal coefficients $\betam _I,\betaf_I$ are given in \eqref{medium_I},  \eqref{fluid_I}.

		\section{Auxiliary Results From Matrix Analysis}
		\label{append_matrix_analysis}
		In this appendix we collect some results from matrix analysis taken from the literature. They will be used in the proofs of Theorem \ref{stable_m} and Lemma \ref{lem:bounds_GHF}. Let  $\matgen \in \C^{n \times n}$ be some generic matrix.  For $i=1,\ldots,n$ we introduce the notations
		\begin{align}
			\label{mat:row:char}
			R_i(\matgen) &=\sum_{j \neq i}|\matgenel_{ij}|, \quad J_i(\matgen) =|\matgenel_{ii}| - R_i(\matgen), \quad 
			S_i(\matgen) =|\matgenel_{ii}| + R_i(\matgen)=\sum_{j }|\matgenel_{ij}|. 
		\end{align}
		Note that the maximum norm of $\matgen$ is given by  $\|\matgen\|_\infty=\max_i S_i(\matgen)$. The quantities $R_i(\matgen)$ appear as radii of Gershgorin's circles of $\matgen$ and the $J_i(\matgen)$ are used to describe diagonal dominance of  $\matgen$.
		\begin{lemma}[Gershgorin's Circle Theorem, Varga \cite{varga2004}] \label{Gershgorin_C}\\	
			Let $\matgen \in \mathbb{C}^{n \times n}$ and  for $ i=1,\ldots,n$ let $D_i=\{z \in \mathbb{C}: |z-\matgenel_{ii}| \leq R_i\}$ be the closed discs in the complex plane centred at $\matgenel_{ii}$ with radius $R_i=R_i(\matgen)$ given in  \eqref{mat:row:char}.
			Then all the eigenvalues of $\matgen$ lie in the union of  the discs $D_1,\ldots, D_n$. 
		\end{lemma}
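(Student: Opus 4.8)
The plan is to run the classical argument based on the coordinate of largest modulus of an eigenvector, which is exactly the proof given in Varga \cite{varga2004}; it is short enough to reproduce here. First I would fix an arbitrary eigenvalue $\lambda$ of $\matgen$ together with an associated eigenvector $x=(x_1,\dots,x_n)^{\top}\in\C^n\setminus\{0\}$, so that $\matgen x=\lambda x$. Since $x\neq 0$, there exists an index $i\in\{1,\dots,n\}$ with $|x_i|=\max_{1\le k\le n}|x_k|>0$. Fixing such an $i$ is the one place where a little care is needed, because the estimate below will be divided by $|x_i|$ and that quantity must be nonzero.

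Next I would read off the $i$-th component of the identity $\matgen x=\lambda x$, namely $\sum_{j=1}^{n}\matgenel_{ij}x_j=\lambda x_i$, and isolate the diagonal contribution to obtain $(\lambda-\matgenel_{ii})x_i=\sum_{j\neq i}\matgenel_{ij}x_j$. Taking moduli, applying the triangle inequality, and then using the maximality of $|x_i|$ gives
\[
|\lambda-\matgenel_{ii}|\,|x_i|
=\Bigl|\sum_{j\neq i}\matgenel_{ij}x_j\Bigr|
\le \sum_{j\neq i}|\matgenel_{ij}|\,|x_j|
\le \Bigl(\sum_{j\neq i}|\matgenel_{ij}|\Bigr)|x_i|
=R_i(\matgen)\,|x_i|,
\]
with $R_i(\matgen)$ as in \eqref{mat:row:char}. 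Dividing by $|x_i|>0$ yields $|\lambda-\matgenel_{ii}|\le R_i(\matgen)$, i.e.\ $\lambda\in D_i\subseteq D_1\cup\dots\cup D_n$. Since $\lambda$ was an arbitrary eigenvalue of $\matgen$, every eigenvalue lies in $\bigcup_{k=1}^{n}D_k$, which is the assertion.

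There is essentially no obstacle in this proof: it is self-contained and uses only the eigenvalue equation and the triangle inequality. The only subtlety worth flagging is the nonvanishing of the selected coordinate $x_i$, which is guaranteed by $x\neq 0$ together with the choice of $i$ as an index of maximal modulus; everything else is a routine estimate. Accordingly, one may either include the three lines above or simply cite Varga \cite{varga2004}.
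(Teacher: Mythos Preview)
Your argument is the standard and correct proof of Gershgorin's theorem. Note that the paper itself does not supply a proof of this lemma: it is listed among ``auxiliary results from matrix analysis taken from the literature'' and simply attributed to Varga~\cite{varga2004}. Your three-line derivation is exactly the classical one found in that reference, so including it or citing Varga are equally acceptable here.
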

		\begin{definition}[Diagonal Dominance]\label{def:diag:dom}\\
			Row $i\in\{1,\ldots,n\}$ of a matrix $\matgen \in \C^{n \times n}$ is called 
			\begin{tabular}[t]{cll}
				strictly &diagonal dominant  & if $ J_i(\matgen)>0$,\\
				weakly  &diagonal dominant  & if $J_i(\matgen)\geq 0$,
			\end{tabular}	\\
			The matrix $\mat{M}$ is called strictly  (weakly) diagonal dominant if all of its rows are strictly  (weakly) diagonal dominant.
		\end{definition}
		The following result says that strictly diagonal dominant matrices are invertible and provides a upper bound for the maximum norm of the inverse.
		\begin{lemma}[Varah \cite{varah1975lower}, Theorem 1]\\
			\label{Inverse}
			Let $\matgen \in \mathbb{C}^{n\times n}$  strictly diagonal dominant matrix. Then $\matgen$ is invertible and 
			\begin{align*}
				\big\|\matgen^{-1}\big\|_{\infty} \leq \frac{1}{J(\matgen)},\quad \text{where} \quad J(\matgen)=\displaystyle\min_{1\leq i \leq n}J_i(\matgen).
			\end{align*}
		\end{lemma}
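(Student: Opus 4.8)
The plan is to prove both assertions by the same elementary device: for any vector $x$, the largest‑modulus coordinate of $\matgen x$ is bounded below in terms of $J_i(\matgen)$ and $\|x\|_\infty$. Invertibility is in fact immediate from Gershgorin's theorem (Lemma~\ref{Gershgorin_C}): strict diagonal dominance of row $i$ says $|0-\matgenel_{ii}|=|\matgenel_{ii}|>R_i(\matgen)$, so $0\notin D_i$ for every $i$, hence $0$ is not an eigenvalue and $\matgen$ is non‑singular. I would nonetheless record the direct argument, since it is exactly the computation the norm bound rests on: if $\matgen x=0$ with $x\neq 0$, choose $i$ with $|x_i|=\|x\|_\infty>0$; the $i$‑th equation gives $\matgenel_{ii}x_i=-\sum_{j\neq i}\matgenel_{ij}x_j$, so
\[
|\matgenel_{ii}|\,|x_i|\;\le\;\sum_{j\neq i}|\matgenel_{ij}|\,|x_j|\;\le\;R_i(\matgen)\,|x_i|\;<\;|\matgenel_{ii}|\,|x_i| ,
\]
a contradiction.

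For the norm bound I would take an arbitrary $y\in\C^n$, set $x:=\matgen^{-1}y$ so that $\matgen x=y$, and again pick $i$ with $|x_i|=\|x\|_\infty$. Isolating the diagonal term in the $i$‑th row, $\matgenel_{ii}x_i=y_i-\sum_{j\neq i}\matgenel_{ij}x_j$, and estimating,
\[
|\matgenel_{ii}|\,\|x\|_\infty\;\le\;|y_i|+R_i(\matgen)\,\|x\|_\infty\;\le\;\|y\|_\infty+R_i(\matgen)\,\|x\|_\infty ,
\]
which after rearranging reads $J_i(\matgen)\,\|x\|_\infty\le\|y\|_\infty$. Since $J(\matgen)=\min_{1\le k\le n}J_k(\matgen)\le J_i(\matgen)$ and $J(\matgen)>0$ by strict diagonal dominance, this gives $\|\matgen^{-1}y\|_\infty=\|x\|_\infty\le\|y\|_\infty/J(\matgen)$; as $y$ was arbitrary, the definition of the matrix maximum norm yields $\|\matgen^{-1}\|_\infty\le 1/J(\matgen)$.

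There is no real obstacle in this argument. The only points needing care are that the extremal index $i$ must be chosen for the particular vector $x=\matgen^{-1}y$ under consideration (it may depend on $y$), and that one uses the fact that the maximum norm on matrices, defined in the excerpt through row sums, coincides with the operator norm subordinate to the vector $\infty$‑norm, so that the pointwise estimate $\|\matgen^{-1}y\|_\infty\le\|y\|_\infty/J(\matgen)$ indeed bounds $\|\matgen^{-1}\|_\infty$.
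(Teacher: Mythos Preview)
Your proof is correct and is the standard elementary argument for this classical result. Note, however, that the paper does not actually prove this lemma: it is quoted from the literature (Varah~\cite{varah1975lower}, Theorem~1) as an auxiliary tool, with no proof given in the text. Your argument --- selecting the index $i$ where $|x_i|=\|x\|_\infty$ and estimating the $i$-th equation of $\matgen x=y$ --- is precisely the proof one finds in Varah's original note and in most textbook accounts, so there is nothing to compare on the level of approach.
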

		Matrices which are weakly but not strictly  diagonal dominant can be singular. A criterion for 
		non-singularity is based  on the following property  of a matrix and the subsequent lemma. 
		That property was introduced  in Horn and  Johnson \cite[Definition 6.2.7]{johnson1985matrix} and termed ``property SC''. In the literature it is also known as ``strongly connected''.
		\begin{definition}[Strongly Connected Matrix]\label{def:SC}\\
			A matrix $\matgen\in \mathbb{C}^{n \times n}$ is called  strongly connected (or of property SC) if for each pair
			of distinct integers $p, q \in \{1, \ldots,n\}$ there is a sequence of distinct integers $k_1 = p,k_2, . . ., k_m = q$ such that each entry $\matgenel_{k_1k_2}, \matgenel_{k_2k_3},$ \ldots,$ \matgenel_{k_{m-1}k_m}$  is non-zero.
		\end{definition}
		For strongly connected  matrices  Horn and Johnson \cite[Corollary 6.2.9]{johnson1985matrix} give the following criterion for non-singularity.
		\begin{lemma}[Better's Corollary]	\label{better Corr}\\
			Suppose that the matrix $\matgen\in \mathbb{C}^{n \times n}$ is strongly connected, weakly diagonally dominant and there exists one strictly diagonal dominant row. Then $\matgen$ is nonsingular.
		\end{lemma}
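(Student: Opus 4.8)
The plan is to prove Better's Corollary by contradiction, in the classical style of Taussky's nonsingularity criterion for irreducibly diagonally dominant matrices. I would assume $\matgen$ is singular, pick $x=(x_1,\dots,x_n)^{\top}\neq 0$ with $\matgen x=0$, and show that this forces \emph{every} row of $\matgen$ to fail strict diagonal dominance, contradicting the hypothesis that one row is strictly diagonally dominant.

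First I would fix an index $p$ attaining $|x_p|=\max_{1\le i\le n}|x_i|>0$. The $p$-th equation $\matgenel_{pp}x_p=-\sum_{j\neq p}\matgenel_{pj}x_j$ gives, after taking moduli, $|\matgenel_{pp}|\,|x_p|\le\sum_{j\neq p}|\matgenel_{pj}|\,|x_j|\le R_p(\matgen)\,|x_p|$, hence $J_p(\matgen)=|\matgenel_{pp}|-R_p(\matgen)\le 0$. Weak diagonal dominance then forces $J_p(\matgen)=0$; consequently the second inequality above is also an equality, so $\sum_{j\neq p}|\matgenel_{pj}|\,(|x_p|-|x_j|)=0$, and since the summands are nonnegative, $|x_j|=|x_p|$ for every $j$ with $\matgenel_{pj}\neq 0$.

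Next I would propagate this maximality. Set $\mathcal{S}=\{i:|x_i|=|x_p|\}$. The step above, applied to any $i\in\mathcal{S}$ in place of $p$, shows $J_i(\matgen)=0$ and that $j\in\mathcal{S}$ whenever $\matgenel_{ij}\neq 0$; i.e. $\mathcal{S}$ is closed under passing to out-neighbours in the directed graph of $\matgen$. Since $\matgen$ is strongly connected (property SC), for each index $q$ there is a chain $p=k_1,\dots,k_m=q$ with $\matgenel_{k_\ell k_{\ell+1}}\neq 0$ for all $\ell$, so walking along the chain and using closure inductively yields $q\in\mathcal{S}$. Hence $\mathcal{S}=\{1,\dots,n\}$ and $J_i(\matgen)=0$ for \emph{all} $i$, contradicting the existence of a strictly diagonally dominant row. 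Therefore $\matgen x=0$ implies $x=0$, and $\matgen$ is nonsingular.

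The routine pieces are the triangle-inequality bound and the induction along the chain. The one delicate point — and the only place all three hypotheses are genuinely used together — is extracting two facts from a single scalar estimate (that $p$'s row is \emph{exactly} balanced, and that every graph-neighbour of $p$ inherits the maximal modulus), and then observing that strong connectivity upgrades ``one row is non-strict'' to ``all rows are non-strict''. I do not anticipate any obstacle beyond careful bookkeeping of these equality cases.
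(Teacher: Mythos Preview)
Your argument is correct; it is the classical Taussky proof for irreducibly diagonally dominant matrices, and every step (the triangle-inequality bound at a maximal-modulus index, the extraction of equality to propagate maximality to graph-neighbours, and the induction along a connecting chain) is sound.

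There is nothing to compare against, however: the paper does not prove this lemma. It is recorded in Appendix~\ref{append_matrix_analysis} as an auxiliary result taken from the literature, with attribution to Horn and Johnson, Corollary~6.2.9, and is then invoked as a black box in the proof of Lemma~\ref{lem_nonsingular}. Your write-up therefore supplies a self-contained proof where the paper simply cites one; since your argument is precisely the standard one found in the cited reference, there is no methodological divergence to discuss.
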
	
		\section{Properties of Matrix $\mat{A}$}
		\label{append_properties_A}					
		We recall that the time-dependence of $\mat{A}(t)$ is  a result
		of the discretization of convection terms in the  heat equation \eqref{heat_eq2}. The latter depend  on the  time-dependent velocity $v_0(t)$ and with some abuse of notation we can write  $\mat{A}=\mat{A}(t)= \mat{A}(v_0(t))$ and  $\mat{B}=\mat{B}(t) =\mat{B}(v_0(t))$.
		Recall that we assume in Ass.~\ref{assum1} that  $v_0(t)$ is piecewise constant with $v_0(t)=\vconst$ during charging and discharging when the pump is on whereas $v_0(t)=0$ if the pump is off. Therefore,  the matrices  $\mat{A,B}$ share this property. They take only the two values $\mat{A}^P=\mat{A}(\vconst),\, \mat{B}^P=\mat{B}(\vconst)$ during pumping and $\mat{A}^N=\mat{A}(0),\, \mat{B}^N=\mat{B}(0)$ if the pump is off.
		Thus, for studying properties of $\mat{A}(t)$ on $[0,T]$ or of $\mat{A}^k=\mat{A}(k\tau)$ for $k=0,\ldots,N_\tau$ it is sufficient to look at the properties of $\mat{A}^P$ and $\mat{A}^N$.
		
		We want to have a closer look to the entries of the block matrices  $\mat{A}_M, \mat{A}_L, \mat{A}_R$ given in  Tables \ref{table:matrix}, \ref{table:matrix_last}   and of $\mat{D}^\pm$ given in  \eqref{mat_D}, forming the system matrix $\mat{A}$. It turns out that for the diagonal entries and the row characteristics $R_i, J_i, S_i$ given in \eqref{mat:row:char} one has to distinguish 14 different cases.  Instead of $n$ rows it is sufficient to consider only 14 representative rows whose indices we denote by $i_l, l=1,\ldots,14$. Table \ref{table_Rowsum} provides a list of diagonal entries $A_{i_li_l}$ and the row characteristics $R_{i_l}(\mat{A}), J_{i_l}(\mat{A}), S_{i_l}(\mat{A})$  in terms of the model and discretization parameters. For the convenience of the reader we give below that information also for the individual non-diagonal entries of $\mat{A}$.
		\begin{align*}
			\betam & =\frac{\am }{h^2_y},~~\betaf=\frac{\af }{h^2_y},~~
			\beta_I^F = \frac{\kappaf}{\kappaf+\kappam}\betam , \quad 
			\betaf_I =  \frac{\kappam}{\kappaf+\kappam}\betaf, \\
			\alpham &=\frac{\am }{h^2_x},~~\alpha^{F+}=\frac{\af }{h^2_x},~~\alpha^{F-}=\frac{\af }{h^2_x}+\frac{\vconst }{h_x}.		
		\end{align*}
		\begin{lemma}\label{lem_DD}
			The matrix $\mat{A}=\mat{A}(t)$ is weakly diagonal dominant for all $t\in[0,T]$.
		\end{lemma}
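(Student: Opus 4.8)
The plan is to verify row by row that each diagonal entry dominates (in absolute value) the sum of the absolute values of the off-diagonal entries in that row, i.e. that $J_i(\mat{A}) = |A_{ii}| - R_i(\mat{A}) \geq 0$ for every $i$. Since the diagonal entries $\gamma$ of the scheme are negative and the off-diagonal entries $\alpha^\pm,\beta^\pm$ are nonnegative (this uses $\vconst>0$, so $\alpha^{F-}=\af/h_x^2+\vconst/h_x>0$, and the fact that all material constants and step sizes are positive), this reduces to checking that $-A_{ii} \geq R_i(\mat{A})$, which in most rows will hold with \emph{equality}. The key structural observation is that the semi-discrete scheme \eqref{scheme} was built so that in the interior the diagonal coefficient exactly balances the four neighbours plus, in the fluid, the convective correction: indeed $\gamaf = -2\af(1/h_x^2+1/h_y^2) - \vconst/h_x$ equals $-(\alpha^{F+}+\alpha^{F-}+2\betaf)$, and likewise $\gamam = -(2\alpham + 2\betam)$. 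So interior rows are weakly (in fact exactly) diagonally dominant by construction.

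The remaining work is to go through the finitely many "special" rows where the generic stencil was modified — these are exactly the $14$ representative row types tabulated in Table \ref{table_Rowsum} and discussed in Appendix \ref{append_block_matrices}: rows next to the top boundary, bottom boundary, left/right/inlet/outlet boundaries, and rows adjacent to (or on, after elimination) the lower and upper interfaces, as well as the corner rows where two such modifications combine. For each such row I would write down the diagonal entry as produced by substituting the relevant discretized boundary/interface relation into \eqref{scheme}, and check the inequality $-A_{ii}\ge R_i$. The point in each case is that eliminating a boundary/interface unknown via a \emph{convex} combination (coefficients summing to $1$, e.g. $\weightf+\weightm=1$ in \eqref{interface_interpol}, or $\kappam/(\kappam+\heattransfer h_y)$ and $\heattransfer h_y/(\kappam+\heattransfer h_y)$ in \eqref{condition_2}) either leaves the row balance unchanged (Neumann/interface: one neighbour is replaced by a convex combination of the retained unknowns, so the total "mass" of off-diagonal weight is unchanged and equality persists) or \emph{strictly improves} it (Robin at the bottom, where a fraction of $\betam$ is shifted to the input term $\mat{B}g$ rather than to an off-diagonal entry, making that row strictly dominant; similarly the diagonal coefficient $\gamma_B^m=\gamam+\alpham$ for rows next to an insulated vertical wall, where one of the two horizontal neighbours drops out).

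Concretely I would organize the verification as: (i) state that off-diagonal entries are $\ge 0$ and diagonal entries $\le 0$, so $R_i = \sum_{j\ne i}A_{ij}$ and $J_i = -A_{ii} - \sum_{j\ne i}A_{ij} = -S_i(\mat{A})$ with $S_i$ the (signed) row sum; (ii) observe that weak diagonal dominance is therefore equivalent to every row sum of $\mat{A}$ being $\le 0$; (iii) check the row sum is exactly $0$ for all interior, Neumann-boundary, and interface-adjacent rows (the convex-combination / telescoping argument above), and strictly negative for the rows that feel the Robin condition. Since $\mat{A}$ takes only the two values $\mat{A}^P$ and $\mat{A}^N$ (Appendix \ref{append_properties_A}), and the argument is uniform in whether $v_0=\vconst$ or $v_0=0$ (the convective terms $\vconst/h_x$ cancel between $\gamaf$ and $\alpha^{F-}$ regardless), this establishes the claim for all $t\in[0,T]$.

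The main obstacle is purely bookkeeping: one must be careful that the \emph{interface} rows are handled correctly, because after the interface unknowns $Q_{ij}$, $(i,j)\in\Ninter$, are eliminated via \eqref{interface_interpol}, the modified rows are the \emph{neighbours} of the interface (equations \eqref{medium_I}, \eqref{fluid_I}), where $\betam$ is replaced by $\betam_I=\weightf\betam$ and $\betaf_I=\weightm\betaf$ and the diagonal picks up $\weightm\betam$ resp. $\weightf\betaf$; one has to confirm that $\betam_I + (\text{amount added to diagonal on the other side}) $ still makes the relevant row sums vanish — the cross-terms $\weightf\betam$ and $\weightm\betaf$ appear in \emph{different} rows, so the balance must be checked within each single row, not across the interface. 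Once the $14$ cases in Table \ref{table_Rowsum} are displayed, each check is a one-line cancellation, so the lemma follows.
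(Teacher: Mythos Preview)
Your proposal is correct and follows essentially the same approach as the paper, which simply inspects the precomputed column $J_{i_l}(\mat{A})$ in Table~\ref{table_Rowsum} and observes that all fourteen representative values are nonnegative. Your version is more self-contained and conceptual (the convex-combination / signed-row-sum viewpoint); one small slip worth fixing is that rows adjacent to a homogeneous Neumann wall (your $\gamma_B^m$ example) yield $J_i=0$, not strict dominance, since the eliminated neighbour is absorbed into the diagonal rather than dropped---but this does not affect the weak-dominance claim.
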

		\begin{proof}
			Inspecting the quantities $J_{i_l}(\mat{A})$ and  Table \ref{table_Rowsum} it can be seen that it holds  $J_{i_l}(\mat{A})\ge 0$, hence by Definition \ref{def:diag:dom} the matrix is diagonal dominant. 
		\end{proof}
		Note that $\mat{A}$ is weakly but not strictly diagonal dominant since not all of its rows are strictly diagonal dominant. 
		
		\begin{lemma}\label{lem_eigvalueA}
			The Gershgorin circles of the matrix $\mat{A}=\mat{A}(t)$ are subsets of $\mathbb{C}_-\cup\{0\}$  for all $t\in[0,T]$.
			Here, $\mathbb{C}_-$ denotes the set of complex numbers with negative real part.
		\end{lemma}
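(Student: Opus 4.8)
The plan is to show that each Gershgorin circle $D_{i} = \{z\in\mathbb{C}\colon |z - A_{ii}| \le R_{i}(\mat{A})\}$ is contained in $\mathbb{C}_- \cup \{0\}$, which amounts to the purely real statement that $A_{ii} + R_{i}(\mat{A}) \le 0$ for every row $i$, i.e., $S_i(\mat{A}) \le |A_{ii}| + R_i(\mat{A})$ with the sharper requirement that the rightmost point $A_{ii} + R_i(\mat{A})$ of the disc does not exceed zero. Since every diagonal entry of $\mat{A}$ is negative (inspect $\gamma^m$, $\gamma^f$, $\gamma^m_I$, $\gamma^f_I$, the boundary-modified versions $\gamma^m_B$, $\gamma^m_T$, $\gamma^m_{BB}$, $\gamma^m_{TB}$, $\gamma^f_L$, $\gamma^f_R$, $\gamma^f_{IL}$, $\gamma^f_{IR}$, etc., all of which carry the $-2a^\dom(1/h_x^2+1/h_y^2)$ term possibly augmented by $-\vconst/h_x$ and only ever reduced in magnitude by adding back positive $\weight$-weighted $\beta$ terms), this reduces to checking $A_{ii} + R_i(\mat{A}) \le 0$, equivalently $-A_{ii} \ge R_i(\mat{A})$, i.e., weak diagonal dominance with the diagonal entry negative. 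But weak diagonal dominance is exactly Lemma \ref{lem_DD}, which gives $J_{i}(\mat{A}) = |A_{ii}| - R_{i}(\mat{A}) \ge 0$; combined with $A_{ii} < 0$ this yields $A_{ii} + R_i(\mat{A}) = -|A_{ii}| + R_i(\mat{A}) = -J_i(\mat{A}) \le 0$, so each disc lies in the closed left half-plane, and its rightmost point is $0$ precisely when the row is only weakly (not strictly) dominant.

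Concretely I would carry this out in three steps. First, invoke the reduction to the $14$ representative rows $i_l$, $l=1,\dots,14$, listed in Table \ref{table_Rowsum}, so that only finitely many cases need be examined; this is legitimate because, as explained in Appendix \ref{append_properties_A}, every row of $\mat{A}$ coincides (in its diagonal entry and row characteristics) with one of these representatives, and since $v_0(t)\in\{0,\vconst\}$ it suffices to treat $\mat{A}^P$ and $\mat{A}^N$. Second, for each representative row read off from Table \ref{table_Rowsum} that the diagonal entry $A_{i_l i_l}$ is strictly negative and that $J_{i_l}(\mat{A}) \ge 0$ (the latter is already Lemma \ref{lem_DD}). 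Third, conclude $A_{i_l i_l} + R_{i_l}(\mat{A}) = A_{i_l i_l} + (|A_{i_l i_l}| - J_{i_l}(\mat{A})) = -J_{i_l}(\mat{A}) \le 0$, hence $D_{i_l}\subset\{z\colon \operatorname{Re} z \le 0\} = \mathbb{C}_-\cup(i\mathbb{R})$; and because $A_{i_l i_l}$ is real, the disc actually touches the imaginary axis only at the single point $0$ (attained when $J_{i_l}=0$), so $D_{i_l}\subseteq \mathbb{C}_-\cup\{0\}$.

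The only genuinely non-routine point is the verification that every diagonal entry is strictly negative even after the interface and boundary modifications, since those modifications \emph{add} positive quantities (e.g.\ $\gamma^m_I = \gamma^m + \weightm\beta^m$, $\gamma^m_B = \gamma^m + \tfrac{\kappam}{\kappam+\heattransfer h_y}\beta^m$, $\gamma^m_{BB} = \alpham + \tfrac{\kappam}{\kappam+\heattransfer h_y}\beta^m + \gamma^m$). One checks that in each case the added term is strictly smaller in absolute value than the $-2a^\dom(1/h_x^2+1/h_y^2)$ contribution it is added to — e.g.\ $\weightm\beta^m \le \beta^m = a^m/h_y^2 < 2a^m(1/h_x^2+1/h_y^2)$, and $\tfrac{\kappam}{\kappam+\heattransfer h_y}<1$ together with $\alpham = a^m/h_x^2$ still leaves $\gamma^m_{BB} < -a^m/h_x^2 - a^m/h_y^2 < 0$ — so negativity is preserved; this is exactly the kind of case-by-case bookkeeping Table \ref{table_Rowsum} is designed to support, and it is the main (though elementary) obstacle. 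With that in hand the lemma follows immediately from Gershgorin's Circle Theorem (Lemma \ref{Gershgorin_C}).
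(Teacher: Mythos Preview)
Your approach is essentially identical to the paper's own proof: reduce to the 14 representative rows of Table~\ref{table_Rowsum}, observe that the (real) diagonal entries are strictly negative, invoke Lemma~\ref{lem_DD} to get $J_{i_l}(\mat{A})\ge 0$, and conclude that each Gershgorin disc has its rightmost point at $-J_{i_l}(\mat{A})\le 0$. Your added remarks---the explicit check that the boundary/interface modifications do not destroy negativity of the diagonal, and the observation that a real-centred disc of radius $|A_{ii}|$ meets the imaginary axis only at $0$---are useful elaborations but not a different route.
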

		\begin{proof}
			Let us examine the Gershgorin's circles of ${\mat{A}}$ for the  14 different representative rows  denoted by $D_{i_l}=D_{i_l}(C_{i_l},R_{i_l})$ with centres $C_{i_l}=\mat{A}_{i_li_l}$ and the radii $R_{i_l}(\mat{A}),~ {l}=1,\ldots, 14$, 
			given in Table \ref{table_Rowsum}.
			Since all entries of $\mat{A}$ are real, the centres $C_{i_l}=\mat{A}_{{i_l}{i_l}}<0$ of the discs are on the negative real axis. 
			Lemma \ref{lem_DD} shows that $\mat{A}$ is diagonal dominant, i.e., $J_{i_l}(\mat{A})= |C_{i_l}|- R_{i_l}(\mat{A}) \ge 0$. Hence, the radii $R_{i_l}(\mat{A})$ of the Gershgorin circles  never exceed $|C_{i_l}|$ and it holds $D_{i_l}\subset \mathbb{C}_-\cup\{0\}$.
			\qed
		\end{proof}
		
		\begin{lemma}\label{lem_SC}
			The matrix $\mat{A}=\mat{A}(t)$ is strongly connected for all $t\in[0,T]$.
		\end{lemma}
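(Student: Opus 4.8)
The plan is to translate ``property SC'' into a statement about a directed graph and then exploit the block structure of $\mat{A}$. Associate to $\mat{A}=\mat{A}(t)$ the directed graph $G(\mat{A})$ with vertex set $\{1,\ldots,n\}$ and an arc $l\to m$ whenever $\mat{A}_{lm}\neq 0$; by Definition~\ref{def:SC}, $\mat{A}$ has property SC precisely when $G(\mat{A})$ is strongly connected, i.e.\ every vertex can be reached from every other along a sequence of distinct indices with nonzero consecutive entries. Through the enumeration $\mathcal{K}:\Nfm\to\{1,\ldots,n\}$, the vertices correspond to the interior grid points in $\mathcal{N}^{\storage}=\Nfluid\cup\Nmed$, arranged in $N_x-1$ ``columns'' of $\ncol=N_y-2n_P-1$ points each (the $2n_P$ interface points having been eliminated), so it suffices to exhibit, for any two such grid points, a path joining them along nonzero entries of $\mat{A}$.

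First I would collect the relevant nonzero off-diagonal entries of $\mat{A}$ from \eqref{matrix_A}, Tables~\ref{table:matrix} and \ref{table:matrix_last}, and \eqref{mat_D}. \emph{Horizontal coupling:} the off-diagonal blocks $\mat{D}^{\pm}$ are \emph{diagonal} matrices whose entries are $\alpham=\am/h_x^2>0$, $\alpha^{F+}=\af/h_x^2>0$ and $\alpha^{F-}=\af/h_x^2+v_0(t)/h_x>0$ --- the last being strictly positive even in the ``pump off'' regime since $v_0(t)\ge 0$. Hence every entry of $\mat{D}^{\pm}$ is strictly positive, so the point at position $r$ of column $i$ is joined, in both directions, to the point at position $r$ of columns $i\pm 1$. \emph{Vertical coupling:} each diagonal block $\mat{A}_L,\mat{A}_M,\mat{A}_R$ is tridiagonal, and all of its sub- and super-diagonal entries lie among $\betam=\am/h_y^2$, $\betaf=\af/h_y^2$, $\betam_I=\weightf\betam$, $\betaf_I=\weightm\betaf$, with $\weightf=\kappaf/(\kappaf+\kappam)\in(0,1)$ and $\weightm=1-\weightf\in(0,1)$; all four are strictly positive (the coefficients $\betam_I,\betaf_I$ coming from the elimination of interface points via \eqref{medium_I} and \eqref{fluid_I}, while the boundary-modified blocks $\mat{A}_L,\mat{A}_R$ differ from $\mat{A}_M$ only on the main diagonal). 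Thus the $\ncol$ points of any fixed column form a bidirectionally connected path.

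Combining the two couplings, given distinct vertices whose grid coordinates are mapped to column $i_1$/position $r_1$ and column $i_2$/position $r_2$, I would construct a path by first moving vertically within column $i_1$ from position $r_1$ to position $r_2$ (using the positive $\beta$-entries), then moving horizontally one column at a time from column $i_1$ to column $i_2$ at fixed position $r_2$ (using the positive entries of $\mat{D}^{\pm}$). This is a sequence of distinct indices along which each consecutive entry of $\mat{A}$ is nonzero, which is exactly property SC. Since the construction never uses the actual value of $v_0(t)$, only $v_0(t)\ge 0$, the conclusion holds for all $t\in[0,T]$.

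The argument is purely combinatorial, so the hard part is merely careful bookkeeping: verifying that none of the off-diagonal coefficients degenerate to zero --- in particular that $\alpha^{F-}>0$ when the pump is off, and that the interface-induced coefficients $\betam_I,\betaf_I$ are positive --- and checking that the enumeration $\mathcal{K}$ does map vertical and horizontal grid neighbours (skipping the removed interface points) to index pairs whose corresponding entries of $\mat{A}$ are precisely the $\beta$- and $\alpha$-coefficients listed above. All of this can be read off from the coefficient list and tables in Appendix~\ref{append_properties_A}.
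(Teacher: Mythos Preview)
Your argument is correct and in fact more careful than the paper's. The paper connects an arbitrary pair $p<q$ by the consecutive sequence $p,p+1,\ldots,q$ and asserts that every sub- and super-diagonal entry of $\mat{A}$ is a nonzero $\beta$-coefficient. You instead work with the grid geometry: the $\beta$-entries of the tridiagonal diagonal blocks $\mat{A}_L,\mat{A}_M,\mat{A}_R$ give vertical moves within a column, the strictly positive diagonal entries of $\mat{D}^\pm$ give horizontal moves between columns, and you concatenate one vertical and one horizontal segment. This buys robustness: at the block boundaries $l=i\ncol$ the sub/super-diagonal entry of $\mat{A}$ sits at the $(\ncol,1)$ or $(1,\ncol)$ corner of the \emph{diagonal} matrix $\mat{D}^\pm$ and therefore vanishes, so the paper's straight walk along consecutive indices actually breaks there; your two-direction construction sidesteps the issue by crossing between columns via the $\alpha$-entries at fixed vertical position. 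Your explicit check that $\alpha^{F-}>0$ even when the pump is off, and that $\betam_I,\betaf_I>0$, is exactly what is needed to make the argument airtight.
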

		\begin{proof}
			Let $(p,q)$ be a  pair of distinct integers with $p, q \in \{1, \ldots,n\}$. Then we can choose the sequence of distinct integers $k_1,k_2, \ldots,k_m,$ such that  $m=|p-q|+1$ and  $k_j=p+j-1$~(for $p<q$) and $k_j=p-j+1$~(for $~p>q$). It holds  $	{A}_{k_jk_{j+1}}\neq 0$ since these entries are located on the upper and lower subdiagonal of $\mat{A}$ for which we have
			\begin{align*}
				{A}_{k_jk_{j+1}}=\begin{cases}
					\betaFm, \qquad \text{for} \quad (k_j,k_{j+1}) \in \Nfm \setminus \Ninter_N,\\
					\betaFm_I, \qquad \text{for} \quad (k_j,k_{j+1}) \in \Ninter_N,
				\end{cases}
			\end{align*}
			where $\Nfm $ is the set of grid points in the fluid and medium $\Df \cup \Dm$ and $\Ninter_N$ the set of neighboring grid points to the interface. Since $\betaFm$ given in \eqref{coeff_schem}, \eqref{coeff_schem_medium} and $\betaFm_I $ given in \eqref{medium_I}, \eqref{fluid_I} are positive, we have $\mat{A}_{k_jk_{j+1}} \neq 0,~j=1,2,\ldots,m$. Thus, the matrix $\mat{A}$ is strongly connected. \qed
		\end{proof}
		\begin{lemma}\label{lem_nonsingular}
			The matrix $\mat{A}=\mat{A}(t)$ is non-singular for all $t\in[0,T]$.	
		\end{lemma}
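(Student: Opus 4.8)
The plan is to derive non-singularity of $\mat{A}=\mat{A}(t)$ directly from Better's Corollary (Lemma~\ref{better Corr}), whose three hypotheses are weak diagonal dominance, strong connectivity, and the existence of at least one strictly diagonal dominant row. The first two are already in hand, namely Lemma~\ref{lem_DD} (weak diagonal dominance for all $t\in[0,T]$) and Lemma~\ref{lem_SC} (strong connectivity for all $t\in[0,T]$). So the only work left is to exhibit one row index $i_l$ with $J_{i_l}(\mat{A})>0$.

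The rows to look at are those belonging to grid points $(i,1)$ next to the bottom boundary $\partial\Dbottom$: these are the only rows into which the Robin condition~\eqref{Robin}, through its discretization~\eqref{condition_2}, enters; every other boundary row only sees a homogeneous Neumann condition and, as in the proof of Lemma~\ref{lem_DD}, stays weakly but not strictly dominant. Fix such an $(i,1)$ with $2\le i\le N_x-2$, $(i,0)\in\NbottomB$, and $(i,2)\notin\Ninter$, and put $i_l=\mathcal{K}(i,1)$. Inserting~\eqref{condition_2} into the scheme~\eqref{scheme} removes the boundary neighbour $(i,0)$, so the surviving off-diagonal entries of that row of $\mat{A}$ are $\alpham$ from the horizontal neighbours $(i\pm1,1)$ (contributed by $\mat{D}^\pm$, cf.~\eqref{mat_D}) and $\betam$ from $(i,2)$; hence $R_{i_l}(\mat{A})=2\alpham+\betam$, while the diagonal entry equals $\gamam_B=\gamam+\tfrac{\kappam}{\kappam+\heattransfer h_y}\betam$. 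Since $\gamam=-2\alpham-2\betam$ by~\eqref{coeff_schem_medium} and $0<\tfrac{\kappam}{\kappam+\heattransfer h_y}<1$, one has $\gamam_B<0$, and therefore
\begin{align*}
  J_{i_l}(\mat{A})=\big|\gamam_B\big|-R_{i_l}(\mat{A})
  =\Big(2\alpham+2\betam-\tfrac{\kappam}{\kappam+\heattransfer h_y}\betam\Big)-\big(2\alpham+\betam\big)
  =\frac{\heattransfer h_y}{\kappam+\heattransfer h_y}\,\betam>0,
\end{align*}
the positivity coming from $\heattransfer>0$, $h_y>0$ and $\betam=\am/h_y^2>0$. (The variants of this computation for the bottom-corner rows, or for a bottom row adjacent to an interface, are identical and give the same positive quantity $\tfrac{\heattransfer h_y}{\kappam+\heattransfer h_y}\betam$, so in fact every bottom row is strictly dominant; one is enough.)

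Combining this strictly dominant row with Lemma~\ref{lem_DD} and Lemma~\ref{lem_SC}, Better's Corollary (Lemma~\ref{better Corr}) gives that $\mat{A}$ is non-singular. Because, as recorded at the beginning of Appendix~\ref{append_properties_A}, $\mat{A}(t)$ assumes only the two values $\mat{A}^P$ and $\mat{A}^N$, and the argument above does not use the fluid velocity or the inlet boundary at all, the conclusion holds uniformly for $t\in[0,T]$.

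I do not expect a genuine difficulty in carrying this out; the only care needed is the neighbour bookkeeping in the bottom rows after~\eqref{condition_2} is substituted, and the conceptual point that the strictly dominant row is supplied precisely by the open, non-insulated bottom ($\heattransfer>0$). For $\heattransfer=0$ the bottom condition would collapse to a homogeneous Neumann condition, $\mat{A}$ would be only weakly diagonal dominant in every row, and this route would fail --- indeed in the pump-off regime all row sums of $\mat{A}$ would then vanish, so the constant vector would lie in its kernel and $\mat{A}$ would be singular.
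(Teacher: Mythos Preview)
Your proof is correct and follows essentially the same route as the paper: weak diagonal dominance (Lemma~\ref{lem_DD}), strong connectivity (Lemma~\ref{lem_SC}), and existence of a strictly dominant row feed into Better's Corollary. The paper simply cites Table~\ref{table_Rowsum} for the strictly dominant row, whereas you carry out the explicit computation for a bottom-boundary row and recover exactly the entry $J_{i_l}(\mat{A})=\frac{\heattransfer h_y}{\kappam+\heattransfer h_y}\betam$ recorded there (rows $l=1,3$); your closing remark on the role of $\heattransfer>0$ is a nice addition but not needed for the proof.
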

		\begin{proof}
			From Lemma \ref{lem_DD} and \ref{lem_SC} it is known that $\mat{A}(t)$ is weakly diagonal dominant and strongly connected for all $t\in[0,T]$. Table \ref{table_Rowsum} shows that  there exist strictly diagonal dominant rows. Hence,  Better's Corollary (see Lemma \ref{better Corr})  implies that   $\mat{A}(t)$ is nonsingular.	
			\qed	
		\end{proof}
		
		\begin{lemma}\label{lem_max_norm_A}
			For  the maximum norm of the matrix $\mat{A}=\mat{A}(t)$ it holds 
			\begin{align*}
				\max_{t\in[0,T]} \|\mat{A}(t)\|_\infty =\max \big\{\big\|\mat{A}^{P}\big\|_{\infty},~\big\|\mat{A}^{N}\big\|_{\infty}\big\}
				\le 4 \max\{\af ,\am \}\Big(\frac{1}{h_x^2}+\frac{1}{h_y^2}\Big)+\frac{2\vconst }{h_x}.
			\end{align*}
		\end{lemma}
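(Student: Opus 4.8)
The plan is to reduce the statement to bounding the fourteen representative absolute row sums $S_{i_l}(\mat{A})$, $l=1,\ldots,14$, collected in Table~\ref{table_Rowsum}, since $\|\mat{A}\|_\infty=\max_i S_i(\mat{A})$ with $S_i$ as in \eqref{mat:row:char}. The first equality comes essentially for free: as recalled at the start of this appendix, $\mat{A}(t)$ is piecewise constant in $t$ and takes only the two values $\mat{A}^P=\mat{A}(\vconst)$ and $\mat{A}^N=\mat{A}(0)$, so the supremum over $[0,T]$ is the maximum of the two norms. All the actual work is in the inequality.

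First I would compute the row sums for the two ``generic'' stencils. For an interior point of the medium domain $\Dm$ the row of $\mat{A}$ has entries $\alpham$ (twice), $\betam$ (twice) and the negative diagonal $\gamam$, so by \eqref{coeff_schem_medium} one gets $S_i(\mat{A})=|\gamam|+2\alpham+2\betam=4\am\big(\tfrac{1}{h_x^2}+\tfrac{1}{h_y^2}\big)$. For an interior point of the fluid domain $\Df$ the entries are $\alpha^{F+}$, $\alpha^{F-}$, $\betaf$ (twice) and $\gamaf$; using $\alpha^{F-}=\tfrac{\af}{h_x^2}+\tfrac{v_0(t)}{h_x}$ and $|\gamaf|=2\af\big(\tfrac{1}{h_x^2}+\tfrac{1}{h_y^2}\big)+\tfrac{v_0(t)}{h_x}$ from \eqref{coeff_schem}, the two convective contributions add and give $S_i(\mat{A})=4\af\big(\tfrac{1}{h_x^2}+\tfrac{1}{h_y^2}\big)+\tfrac{2\vconst}{h_x}$ when the pump is on and $4\af\big(\tfrac{1}{h_x^2}+\tfrac{1}{h_y^2}\big)$ when it is off. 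Both are $\le 4\max\{\af,\am\}\big(\tfrac{1}{h_x^2}+\tfrac{1}{h_y^2}\big)+\tfrac{2\vconst}{h_x}$.

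The remaining, slightly tedious, step is to show that none of the rows modified by boundary or interface conditions exceeds this value. I would use a uniform argument: each such row arises from one of the two interior stencils by substituting a relation of the form $Q_{\mathrm{nb}}(t)=w\,Q_{\mathrm{c}}(t)+(\text{input data})$ for a neighbouring value, with weight $w\in[0,1]$ --- namely $w=1$ for the homogeneous Neumann conditions \eqref{condition_1}, \eqref{output}, $w=\tfrac{\kappam}{\kappam+\heattransfer h_y}$ for the Robin condition \eqref{condition_2}, $w=0$ for the inlet Dirichlet condition \eqref{condition_3}, and $w=\weightf$ or $w=\weightm$ for the interface relations \eqref{interface_interpol}. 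Such a substitution deletes a positive off-diagonal entry $c$ and adds $wc>0$ to the negative diagonal entry (for an interface it instead moves $c$ to another off-diagonal as $wc$ and adds $(1-w)c$ to the diagonal); as long as the modified diagonal entry stays negative, $|\gamma|$ drops by exactly the amount added and the row sum strictly decreases. Hence $\max_l S_{i_l}(\mat{A})$ is attained on one of the two interior rows above, which yields the bound. Alternatively one can simply read the fourteen values $S_{i_l}(\mat{A})$ off Table~\ref{table_Rowsum} and verify the inequality case by case.

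I expect the main obstacle to be the sign bookkeeping in the last step: for each modified row --- bottom, top, left, right, inlet, outlet, the interface-adjacent rows, and the combined corner cases such as $\gamam_{BB}$ and $\gamaf_{IL/R}$ --- one must confirm that the new diagonal entry is still negative, since that is what makes ``adding a positive term to the diagonal'' the same as ``decreasing $|\gamma|$''. This holds because the positive term added is always at most $\tfrac{\am}{h_y^2}$ or $\tfrac{\af}{h_y^2}$, whereas $|\gamma|$ is at least $2\max\{\af,\am\}/h_y^2$; but it has to be checked in each configuration. Everything else is routine arithmetic.
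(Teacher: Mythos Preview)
Your proposal is correct and follows the same route as the paper: the first equality is immediate from the piecewise-constant structure of $\mat{A}(t)$, and the inequality is obtained by inspecting the fourteen representative row sums $S_{i_l}(\mat{A})$ of Table~\ref{table_Rowsum} and observing that the maximum is attained at the two interior rows $i_6$ (medium) and $i_7$ (fluid), whose sums are exactly $4\am\big(\tfrac{1}{h_x^2}+\tfrac{1}{h_y^2}\big)$ and $4\af\big(\tfrac{1}{h_x^2}+\tfrac{1}{h_y^2}\big)+\tfrac{2v_0(t)}{h_x}$. The paper's proof literally says this in one sentence and refers to the table; you go further and supply a structural explanation --- every boundary or interface modification replaces an off-diagonal entry $c>0$ by $wc$ with $w\in[0,1]$ and shifts the remainder into the (still negative) diagonal, hence strictly lowers $S_i$ --- which the paper does not spell out. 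That extra argument is sound (your check that the modified diagonal stays negative goes through in every case, including the compound corner/interface rows, by applying the substitutions sequentially), and it has the advantage of making clear \emph{why} rows $i_6,i_7$ dominate rather than leaving it to a case-by-case reading of the table.
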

		\begin{proof}
			$\mat{A}(t)$ is  piecewise constant taking only the two values $\mat{A}^P$ and $\mat{A}^N$. From the last column of Table \ref{table_Rowsum} showing the 14 different row sums $S_{i}(\mat{A}^{P/N})$ of the two matrices  it can be easily seen that  $\|\mat{A}^{P/N}\|_\infty \le \max\{S_{i_6}(\mat{A}^{P/N}),S_{i_7}(\mat{A}^{P/N})\}$  yielding the estimate in the lemma.
			\qed	
		\end{proof}

		\begin{landscape}
			\begin{table}[p]
				\begin{center}
					\[\begin{array}{|c|c|c|c|c|c|}      
						\hline
						& \multicolumn{2}{|c|}{\text{Gershgorin circles: centres}}& \text{Gershgorin circles: radii}& \text{  Differences} & \text{  Row sums }\\
						l& \multicolumn{2}{|c|}{C_{i_l}=\mat{A}_{i_li_l} \quad \text{(diagonal entries)}} &  ~R_{i_l}(\mat{A})=\displaystyle\sum_{j=1,j \neq i_l}^{n} |\mat{A}_{i_l,j}|&J_{i_l}(\mat{A})=|\mat{A}_{i_li_l}|-\displaystyle\sum_{j=1,j \neq i_l}^{n} |\mat{A}_{i_l,j}| &  \quad S_{i_l}(\mat{A})=\displaystyle\sum_{j=1}^{n} |\mat{A}_{i_l,j}| \\\hline						
						1 &\gamam_{DB}& -\am  \big(\frac{1}{h^2_x}+\frac{1}{h^2_y}\big) -\big(\frac{\heattransfer h_y}{\kappam+\heattransfer h_y}\big)\frac{\am }{h^2_y} & \am  \big(\frac{1}{h^2_x}+\frac{1}{h^2_y}\big) &\big(\frac{\heattransfer h_y}{\kappam+\heattransfer h_y}\big)\frac{\am }{h^2_y}&
						2\am  \big(\frac{1}{h^2_x}+\frac{1}{h^2_y}\big) +\big(\frac{\heattransfer h_y}{\kappam+\heattransfer h_y}\big)\frac{\am }{h^2_y}\\[1.5ex]
						2&\gamam_{UB}&-\am \big(\frac{1}{h^2_x}+\frac{1}{h^2_y}\big)&\am \big(\frac{1}{h^2_x}+\frac{1}{h^2_y}\big)&0&2\am \big(\frac{1}{h^2_x}+\frac{1}{h^2_y}\big)\\[1.5ex]
						3 &\gamam_D& -\am  \big(\frac{2}{h^2_x}+\frac{1}{h^2_y}\big) -\big(\frac{\heattransfer h_y}{\kappam+\heattransfer h_y}\big)\frac{\am }{h^2_y}& \am  \big(\frac{2}{h^2_x}+\frac{1}{h^2_y}\big)&
						\big(\frac{\heattransfer h_y}{\kappam+\heattransfer h_y}\big)\frac{\am }{h^2_y}&
						2\am  \big(\frac{2}{h^2_x}+\frac{1}{h^2_y}\big) +\big(\frac{\heattransfer h_y}{\kappam+\heattransfer h_y}\big)\frac{\am }{h^2_y}\\[1.5ex]
						4&\gamam_U&-\am \big(\frac{2}{h^2_x}+\frac{1}{h^2_y}\big)& \am \big(\frac{2}{h^2_x}+\frac{1}{h^2_y}\big)&0&2\am \big(\frac{2}{h^2_x}+\frac{1}{h^2_y}\big)\\[1.5ex]	
						5 &\gamam_B& -\am \big(\frac{1}{h^2_x}+\frac{2}{h^2_y}\big)&\am \big(\frac{1}{h^2_x}+\frac{2
						}{h^2_y}\big)&0&2\am \big(\frac{1}{h^2_x}+\frac{2}{h^2_y}\big) \\[1.5ex]	\hline
						6 &\gamam & -2\am \big(\frac{1}{h^2_x}+\frac{1}{h^2_y}\big)& 2\am \big(\frac{1}{h^2_x}+\frac{1}{h^2_y}\big)&0&4\am \big(\frac{1}{h^2_x}+\frac{1}{h^2_y}\big)\\[1.5ex]	
						7 &\gamaf & -2\af \big(\frac{1}{h^2_x}+\frac{1}{h^2_y}\big)-\frac{v_0(t) }{h_x}&2\af \big(\frac{1}{h^2_x}+\frac{1}{h^2_y}\big)+\frac{v_0(t) }{h_x}&0&4\af \big(\frac{1}{h^2_x}+\frac{1}{h^2_y}\big)+\frac{2v_0(t) }{h_x}\\[1.5ex]	
						8 &\gamaf_L& 
						\begin{cases}
							-2\af \big(\frac{1}{h^2_x}+\frac{1}{h^2_y}\big)-\frac{\vconst }{h_x}, & \mat{A}=\mat{A}^P\\
							-\af \big(\frac{1}{h^2_x}+\frac{2}{h^2_y}\big),& \mat{A}=\mat{A}^N\\
						\end{cases}& \af \big(\frac{1}{h^2_x}+\frac{2}{h^2_y}\big)&
						\begin{cases}
							\frac{\af }{h^2_x}+\frac{\vconst }{h_x}, &\mat{A}=\mat{A}^P\\
							0,& \mat{A}=\mat{A}^N\\
						\end{cases}&
						\begin{cases}
							\phantom{2}\af \big(\frac{3}{h^2_x}+\frac{4}{h^2_y}\big)+\frac{\vconst }{h_x},& \mat{A}=\mat{A}^P\\
							2\af \big(\frac{1}{h^2_x}+\frac{2}{h^2_y}\big), &\mat{A}=\mat{A}^N\\
						\end{cases}\\[1.5ex]	
						9 &\gamaf_R&  
						-\af \big(\frac{1}{h^2_x}+\frac{2}{h^2_y}\big)-\frac{v_0(t) }{h_x}&\af \big(\frac{1}{h^2_x}+\frac{2}{h^2_y}\big)+\frac{v_0(t) }{h_x}&0&2\af \big(\frac{1}{h^2_x}+\frac{2}{h^2_y}\big)+\frac{2v_0(t) }{h_x}\\[1.5ex]	\hline
						10 &\gamam _{I} &  -\am \big(\frac{2}{h^2_x}+\frac{1}{h^2_y}\big) -\big(\frac{\kappaf}{\kappam+\kappaf}\big)\frac{\am }{h^2_y}& \frac{2\am }{h^2_x} +\big(1+\frac{\kappaf}{\kappam+\kappaf}\big)\frac{\am }{h^2_y}& 0& 2\am \big(\frac{2}{h^2_x}+\frac{1}{h^2_y}\big)+\big(\frac{2\kappaf}{\kappam+\kappaf}\big)\frac{\am }{h^2_y}\\[1.5ex]	
						11 &\gamam _{IB} & -\am \big(\frac{1}{h^2_x}+\frac{1}{h^2_y}\big) -\big(\frac{\kappaf}{\kappam+\kappaf}\big)\frac{\am }{h^2_y} & \frac{\am }{h^2_x} +\big(1+\frac{\kappaf}{\kappam+\kappaf}\big)\frac{\am }{h^2_y}& 0& 2\am \big(\frac{1}{h^2_x}+\frac{1}{h^2_y}\big)+\big(\frac{2\kappaf}{\kappam+\kappaf}\big)\frac{\am }{h^2_y}\\[1.5ex]	
						12&\gamaf _{I} & -\af \big(\frac{2}{h^2_x}+\frac{1}{h^2_y}\big) -\big(\frac{\kappam }{\kappam+\kappaf}\big)\frac{\af }{h^2_y} -\frac{v_0(t)}{h_x}& 
						\frac{2\af }{h^2_x} +\big(1+\frac{\kappam }{\kappam+\kappaf}\big)\frac{\af }{h^2_y} +\frac{v_0(t) }{h_x}&0&2\af \big(\frac{2}{h^2_x}+\frac{1}{h^2_y}\big)+\big(\frac{2\kappam }{\kappam+\kappaf}\big)\frac{\af }{h^2_y}+\frac{2v_0(t) }{h_x}\\[1.5ex]
						13&\gamaf _{IL} & 
						\begin{cases}
							-\af \big(\frac{2}{h^2_x}+\frac{1}{h^2_y}\big) -\big(\frac{\kappam }{\kappam+\kappaf}\big)\frac{\af }{h^2_y}-\frac{\vconst }{h_x},& \mat{A}=\mat{A}^P\\[1ex]
							-\af \big(\frac{1}{h^2_x}+\frac{1}{h^2_y}\big) -\big(\frac{\kappam }{\kappam+\kappaf}\big)\frac{\af }{h^2_y},& \mat{A}=\mat{A}^N\\
						\end{cases}& \frac{\af }{h^2_x} +\big(1+\frac{\kappam }{\kappam+\kappaf}\big)\frac{\af }{h^2_y}&
						\begin{cases}
							\frac{\af }{h^2_x}+\frac{\vconst }{h_x},&\mat{A}=\mat{A}^P\\
							0,&\mat{A}=\mat{A}^N\\
						\end{cases}&
						\begin{cases}
							\phantom{2}\af \big(\frac{3}{h^2_x}+\frac{2}{h^2_y}\big) +\big(\frac{2\kappam }{\kappam+\kappaf}\big)\frac{\af }{h^2_y} +\frac{\vconst }{h_x},&\mat{A}=\mat{A}^P\\[1ex]
							2\af \big(\frac{1}{h^2_x}+\frac{1}{h^2_y}\big) +\big(\frac{2\kappam }{\kappam+\kappaf}\big)\frac{\af }{h^2_y},& \mat{A}=\mat{A}^N\\
						\end{cases}\\[3.5ex]	
						14 &	\gamaf _{IR} &
						-\af \big(\frac{1}{h^2_x}+\frac{1}{h^2_y}\big) -\big(\frac{\kappam }{\kappam+\kappaf}\big)\frac{\af }{h^2_y} 
						-\frac{v_0(t) }{h_x}&
						\frac{\af }{h^2_x} +\big(1+\frac{\kappam }{\kappam+\kappaf}\big)\frac{\af }{h^2_y} +\frac{v_0(t) }{h_x}&0&2\af \big(\frac{1}{h^2_x}+\frac{1}{h^2_y}\big)+\big(\frac{2\kappam }{\kappam+\kappaf}\big)\frac{\af }{h^2_y}+\frac{2v_0(t) }{h_x}\\	
						\hline
					\end{array}\]
					\mycaption{Diagonal entries $\mat{A}_{i_li_l}$ (centres  of Gershgorin circles), radii of Gershgorin circles $R_{i_l}$, differences  $J_{i_l}$  and row sums  $S_{i_l}$ of matrices $\mat{A}^{P}$ and $\mat{A}^{N}$, $l=1,\ldots,14$}
					\label{table_Rowsum}
				\end{center}
			\end{table} 
		\end{landscape}

		\section{Proof of Lemma \ref{lem:bounds_GHF}}
		\label{append_stability_scheme}
		
		\begin{proof}
			\paragraph{First assertion}
			Table~\ref{table_Rowsum} shows that the diagonal entries of the matrices $\mat{A}^k$, $k=1,\ldots,N_\tau$ are all negative. Thus, we have for all $i=1,\ldots,n$
			\begin{align*}
				J_i(\mat{G}^k)=|\mat{G}^{k}_{ii}|- \sum_{j=1, j\neq i} |\mat{G}^{k}_{ij}|=1+\tau \theta (|\mat{A}^k_{ii}|- \sum_{j=1, j\neq i} |\mat{A}^k_{ij}|)=1+\tau \theta J_i(\mat{A}^k) \ge 1, 
			\end{align*}
			since by Lemma \ref{lem_DD} the matrices $\mat{A}^k$ are diagonal dominant and it holds  $J_i(\mat{A}^k) \geq 0$.
			Therefore, the matrices $\mat{G}^{k}=\mathds{I}_{n}-\tau \theta \mat{A}^{k}, ~~k=1,\ldots,N_\tau$ are strictly diagonal dominant. Lemma~\ref{Inverse} implies that $\mat{G}^{k}$ is invertible  and $\|(\mat{G}^{k})^{-1}\|_{\infty} \leq 1/{J(\mat{G}^{k})} \le 1$. For $\theta=0$ it holds $\mat{G}^{k}=\mathds{I}_{n}$, hence $\|(\mat{G}^{k})^{-1}\|_{\infty}=\|\mathds{I}_{n}\|_{\infty}=1$ and the above inequality holds with equality.
			\\[1ex]
			\paragraph{Second assertion} 
			We recall the definition of $\mat{H}^{k} $ given in \eqref{Matrix_form2} which reads as $\mat{H}^{k} =\mathds{I}_{n}+\tau(1-\theta)\mat{A}^k$.
			For $\theta=1$, we have  $\mat{H}^{k}=\mathds{I}_n$, thus  for all $\tau >0 $ it holds $\big\|\mat{H}^{k}\big\|_{\infty}=1$ which proves the claim for $\theta=1$.
			
			Now, let $\theta\in[0,1)]$. We recall that $\mat{A}^k=\mat{A}(k\tau)$ takes only the values $\mat{A}^P$ and $\mat{A}^N$. Thus it is sufficient to show that the claim holds for  $\mat{H}^P$ and $\mat{H}^N$  where  $\mat{H}^{P/N} =\mathds{I}_{n}+\tau(1-\theta)\mat{A}^{P/N}$.
			It holds
			\begin{align*}
				\big\|\mat{H}^{P}\big\|_{\infty}=\displaystyle\max_{1\leq i\leq n}\big\{S_i(\mat{H}^{P})\big\},~~\text{with}~~	S_i(\mat{H}^{P})&=|1+\tau(1-\theta)\mat{A}^{P}_{ii}|+\tau(1-\theta)\displaystyle \sum_{j=1, j\neq i}^{n}|\mat{A}^{P}_{ij}|.
			\end{align*}
			Using the fact that all diagonal entries of the matrix $\mat{A}^P$ are negative, we have for $\tau_i^P=\frac{1}{(1-\theta) |\mat{A}^{P}_{ii}|}$, $i=1,\ldots,n$,  
			\begin{align*}
				|1+\tau(1-\theta)\mat{A}^{P}_{ii}|= \begin{cases} 
					1-\tau(1-\theta)|\mat{A}^{P}_{ii}|,&\text{for}~ \tau \leq\tau_i^P\\
					\tau(1-\theta)|\mat{A}^{P}_{ii}|-1,& \text{for}~ \tau >\tau_i^P.
				\end{cases}
			\end{align*}
			This implies that for $i=1,\ldots,n$, we have
			\begin{align*}
				S_i(\mat{H}^{P})=\begin{cases}
					1-\tau(1-\theta)\Big[|\mat{A}^{P}_{ii}|-\displaystyle \sum_{j=1, j\neq i}^{n}|\mat{A}^{P}_{ ij}|\Big]=1-\tau(1-\theta)R_i(\mat{A}^{P}),& \text{for}~ \tau \leq\tau_i^P,\\
					\tau(1-\theta)\Big[|\mat{A}^{P}_{ii}|+\displaystyle \sum_{j=1, j\neq i}^{n}|\mat{A}^{P}_{ ij}|\Big]-1=-1+ \tau(1-\theta)S_i(\mat{A}^{P}),& \text{for}~ \tau >\tau_i^p.
				\end{cases}
			\end{align*}
			Since $\mat{A}^P$ is weakly diagonal dominant, we distinguish the two cases $J_i(\mat{A}^{P})>0$ and $J_i(\mat{A}^{P})=0$.\\
			For $J_i(\mat{A}^{P})>0$, the sum $S_i(\mat{H}^{P})$ is strictly decreasing in $\tau$ on $[0,\tau_i^P]$ and strictly increasing in $\tau$ on $(\tau_i^P,+\infty)$ and it holds
			\begin{align*}
				S_i(\mat{H}^{P}) \leq 1 ~~\text{for}~~\tau  \leq \overline{\tau}_i^{P}:=\frac{2}{(1-\theta)S_i(\mat{A}^{P})}~~ \text{and}~~S_i(\mat{H}^{P})>1  ~~\text{for}~~\tau > \overline{\tau}_i^{P}.
			\end{align*}
			For $J_i(\mat{A}^{P})=0$, we have $S_i(\mat{A}^{P})=2|\mat{A}^{P}_{ii}|$. It holds $S_i(\mat{H}^{P})=1$ for $\tau \in [0,\overline{\tau}_i^P]$ while $S_i(\mat{H}^{P})$ is  strictly increasing in $\tau$ on $(\overline{\tau}_i^P,+\infty)$, hence $S_i(\mat{H}^{P})>1  ~~\text{for}~~\tau > \overline{\tau}_i^{P}$.\\
			Summarizing we obtain 
			\begin{align*}
				&\big\|\mat{H}^{P}\big\|_{\infty}=\displaystyle\max_{1\leq i\leq n}S_i(\mat{H}^{P})=1   ~~\text{for}~~\tau \leq \overline{\tau}^{P}=\displaystyle\min_{1\leq i\leq n} \overline{\tau}^{P}_i=\frac{2}{(1-\theta)\displaystyle \max_{1\leq i\leq n}S_i(\mat{A}^{P})}
				=\frac{2}{(1-\theta)\|\mat{A}^{P}\|_{\infty}}, 		
			\end{align*}
			and $\|\mat{H}^{P}\big\|_{\infty} >1$ for $\tau > \overline{\tau}^{P}$.
			For $\mat{A}=\mat{A}^{N}$ the proof is analogous.
			Thus, we have 
			\begin{align*}
				\big\|\mat{H}^{k}\big\|_{\infty}\le 1 ~~~\text{for}~~\tau \le \min\{\overline{\tau}^{P},\overline{\tau}^{N}\} = \frac{2}{(1-\theta) 
					\max \big\{\big\|\mat{A}^{P}\big\|_{\infty},~\big\|\mat{A}^{N}\big\|_{\infty}\big\}}.		
			\end{align*}
			Finally, Lemma \ref{lem_max_norm_A} shows that $\max \big\{\big\|\mat{A}^{P}\big\|_{\infty},~\big\|\mat{A}^{N}\big\|_{\infty}\big\}=
			4 \max\{\af ,\am \}\Big(\frac{1}{h_x^2}+\frac{1}{h_y^2}\Big)+\frac{2\vconst }{h_x}=2\eta
			$ which proves the claim.
			\\
			\paragraph{Third assertion} From the definition of ${F}^{k} $ given in \eqref{Matrix_form2} it follows that  for $k=0,\ldots,N_\tau-1$
			\begin{align*}
				\big\|{F}^{k} \big\|_{\infty}& = \big\|\theta \mat{B}^{k+1}g^{k+1}+(1-\theta)\mat{B}^{k}g^{k} \big\|_{\infty}
				\leq \theta \big\|\mat{B}^{k+1}\big\|_{\infty} \big\|g^{k+1}\big\|_{\infty}+(1-\theta) \big\|\mat{B}^{k}\big\|_{\infty} \big\|g^{k}\big\|_{\infty}	\\
				&\le  (\theta +1-\theta) C_B \max_{j=k,k+1}\big\|g^{j}\big\|_{\infty} \le C_B\displaystyle\max_{0\leq j\leq k+1}\big\|g^{j}\big\|_{\infty}.
			\end{align*} 
			where we have used that $\mat{B}^k=\mat{B}(k\tau)$ takes only the values $\mat{B}^P$ and $\mat{B}^N$.
			\qed
		\end{proof}

	\end{appendix}	
	
	\begin{acknowledgements}
		The authors thank  Thomas Apel (Universität der Bundeswehr München), Martin Bähr, Michael Breuss, Carsten Hartmann, Gerd Wachsmuth (BTU Cottbus--Senftenberg), Andreas Witzig (ZHAW Winterhur), Karsten Hartig (Energie-Concept Chemnitz), Dietmar Deunert, Regina Christ  (eZeit Ingenieure Berlin) for valuable discussions	that improved this paper.\\
		P.H.~Takam gratefully acknowledges the  support by the German Academic Exchange Service \linebreak[4] (DAAD) within the project ``PeStO – Perspectives in Stochastic Optimization and Applications''. \\
		R.~Wunderlich gratefully acknowledges the  support by the Federal Ministry of Education and Research (BMBF) within the project ``05M2022 - MONES: Mathematische Methoden für die Optimierung von	Nahwärmenetzen und Erdwärmespeichern''.\\
		The work of O. Menoukeu Pamen was supported with funding provided by the Alexander von Humboldt Foundation, under the programme financed by the German Federal Ministry of Education and Research entitled German Research Chair No 01DG15010.
	\end{acknowledgements}
	
	\bibliographystyle{acm}

\end{document}